\documentclass[10.5pt]{amsart}
\setlength{\unitlength}{1cm}
\usepackage[ansinew]{inputenc}
\usepackage{amssymb}
\usepackage{latexsym}
\usepackage{graphics,color}
\usepackage[dvips]{graphicx}
\usepackage{amsmath,amsfonts}
\usepackage{tikz}
\usepackage{enumitem}

\setlength{\oddsidemargin}{1cm} \setlength{\textwidth}{15cm}

\newtheorem{theorem}{Theorem}[section]
\newtheorem{lemma}[theorem]{Lemma}
\newtheorem{prop}[theorem]{Proposition}

\newtheorem{ass}{Assumption}[section]
\newtheorem{corollary}[theorem]{Corollary}
\newtheorem{definition}[theorem]{Definition}

\usepackage{amssymb,amsfonts,amsmath,amsthm,color}
\usepackage{latexsym}
\usepackage{graphics}
\usepackage{amsfonts}
\pagestyle{plain}
\usepackage{amsbsy}
\usepackage{amsmath}
\usepackage{indentfirst}
\usepackage{graphicx}
\usepackage{geometry}
\geometry{a4paper,left=3cm,right=2cm,top=2cm,bottom=4cm}
\usepackage{dsfont}

    \def\D{\mathcal{D}}
    \def\E{\mathcal{E}}
        \def\F{\mathcal{F}}
    \def\G{\mathcal{G}}
    \def\h{\mathbb{H}^1_{\Gamma}}
    \def\H{\mathcal{H}}

    \def\L{\mathbb{L}^2}
    
    \def\M{\mathcal{M}}
    \def\N{\mathbb{N}}
    
    \def\R{\mathbb{R}}

\def\<{\langle}
\def\>{\rangle}

\renewcommand{\d}{\partial}

\def\beginpf{\noindent {\bf Proof:} \quad}
\def\endpf{\rightline{$\square$}}

\def\restriction#1#2{\mathchoice
              {\setbox1\hbox{${\displaystyle #1}_{\scriptstyle #2}$}
              \restrictionaux{#1}{#2}}
              {\setbox1\hbox{${\textstyle #1}_{\scriptstyle #2}$}
              \restrictionaux{#1}{#2}}
              {\setbox1\hbox{${\scriptstyle #1}_{\scriptscriptstyle #2}$}
              \restrictionaux{#1}{#2}}
              {\setbox1\hbox{${\scriptscriptstyle #1}_{\scriptscriptstyle #2}$}
              \restrictionaux{#1}{#2}}}
\def\restrictionaux#1#2{{#1\,\smash{\vrule height .8\ht1 depth .85\dp1}}_{\,#2}}

\begin{document}

\author{R\'emi Buffe $^1$} 
\footnotetext[1]{Inria, Villers-l\`es-Nancy, F-54600, France. E-mail : \texttt{remi.buffe@inria.fr}}
	
\author{ Marcelo M. Cavalcanti $^2$}
\footnotetext[2]{ Department of Mathematics, State University of
	Maring\'a, 87020-900, Maring\'a, PR, Brazil. E-mail : \texttt{mmcavalcanti@uem.br}}

\author{ Val\'eria N. Domingos Cavalcanti $^3$ }
\footnotetext[3]{ Department of Mathematics, State University of
	Maring\'a, 87020-900, Maring\'a, PR, Brazil. E-mail : \texttt{vndcavalcanti@uem.br}}

\author{ Ludovick Gagnon $^{4,*}$}
\footnotetext[4]{Inria, Villers-l\`es-Nancy, F-54600, France. E-mail : \texttt{ludovick.gagnon@inria.fr},  \textit{corresponding author$^*$}}

\thanks{Research of Marcelo M. Cavalcanti is partially supported by the CNPq Grant
	300631/2003-0. Research of Val\'eria N. Domingos Cavalcanti is partially supported by the CNPq Grant
	304895/2003-2. Research of Ludovick Gagnon is partially supported by ANR ODISSE and by the R\'eseau Franco-Br\'esilien en Math\'ematiques.}

\title[Transmission Problem of Viscoelastic Waves]
{Control and Exponential Stability for a Transmission Problem of a Viscoelastic Wave Equation}

\maketitle

\begin{abstract}

In this article, we consider the energy decay of a viscoelastic wave in an heterogeneous medium. To be more specific, the medium is composed of two different homogeneous medium with a memory term located in one of the medium. We prove exponential decay of the energy of the solution under geometrical and analytical hypothesis on the memory term. \\
\it{Key words and phrases}: wave equation; transmission problem; viscoelastic effect;
exponential stability. \\
\textup{2020} \it Mathematics Subject Classification: 35Q93, 35A27, 35L05, 35L51, 35S15.

\end{abstract}

\section{Introduction}

\subsection{Description of the problem}

Let $\Omega \subset \R^n$ be an open and bounded domain. Let $\Omega_2 \subset \Omega$ such that $\overline{\Omega_2} \subset \Omega$. Define $\Omega_1=\Omega \setminus \overline{\Omega_2}$. The boundary of $\Omega_1$ is therefore given by $\d \Omega_1= \d \Omega \cup \d \Omega_2$. We assume  $\d \Omega $ and $ \d \Omega_2$ to be of class $C^k, k\geq 3$ and with no contact of order $k-1$ with its tangents. The outward unit normal of $\Omega_2$ is denoted $n$.  

We consider a nonnegative function $b(x)\in C^{\infty}(\Omega_1)$. We are interested in studying the {\bf exponential stability} of 
\begin{equation}\label{MP1}\qquad
\left\{
\begin{aligned}
& u_{tt} -k_1\Delta u + k_1\int_{-\infty}^t g(t-s) \textrm{ div}(b(x) \nabla u)(s)\,ds =0, & ~\hbox{ in }\Omega_1 \times (0,\infty),\\
& v_{tt} - k_2\Delta v=0, & ~\hbox{ in }\Omega_2 \times (0,\infty),
\end{aligned}
\right.
\end{equation}
 the boundary condition
\begin{equation}\label{bc}
u=0~\hbox{ on } \d \Omega \times (0,\infty),
\end{equation}
and the transmission conditions
\begin{equation}\label{fronteira}
u=v ~\mbox{ and  } k_2\dfrac{\partial v}{\partial n} = k_1\dfrac{\partial u}{\partial n}-k_1 \int_{-\infty}^t g(t-s)b(x) \dfrac{\partial u}{\partial n}(s)\,ds~\hbox{ on }\d \Omega_2\times (0,\infty),
\end{equation}
where $k_1$ and $k_2$ are positive constants, each one related to the propagation velocity of waves in media $\Omega_1$ and $\Omega_2$, respectively. 

\begin{figure}[!ht]
\begin{center}
	\includegraphics[height=4.5cm]{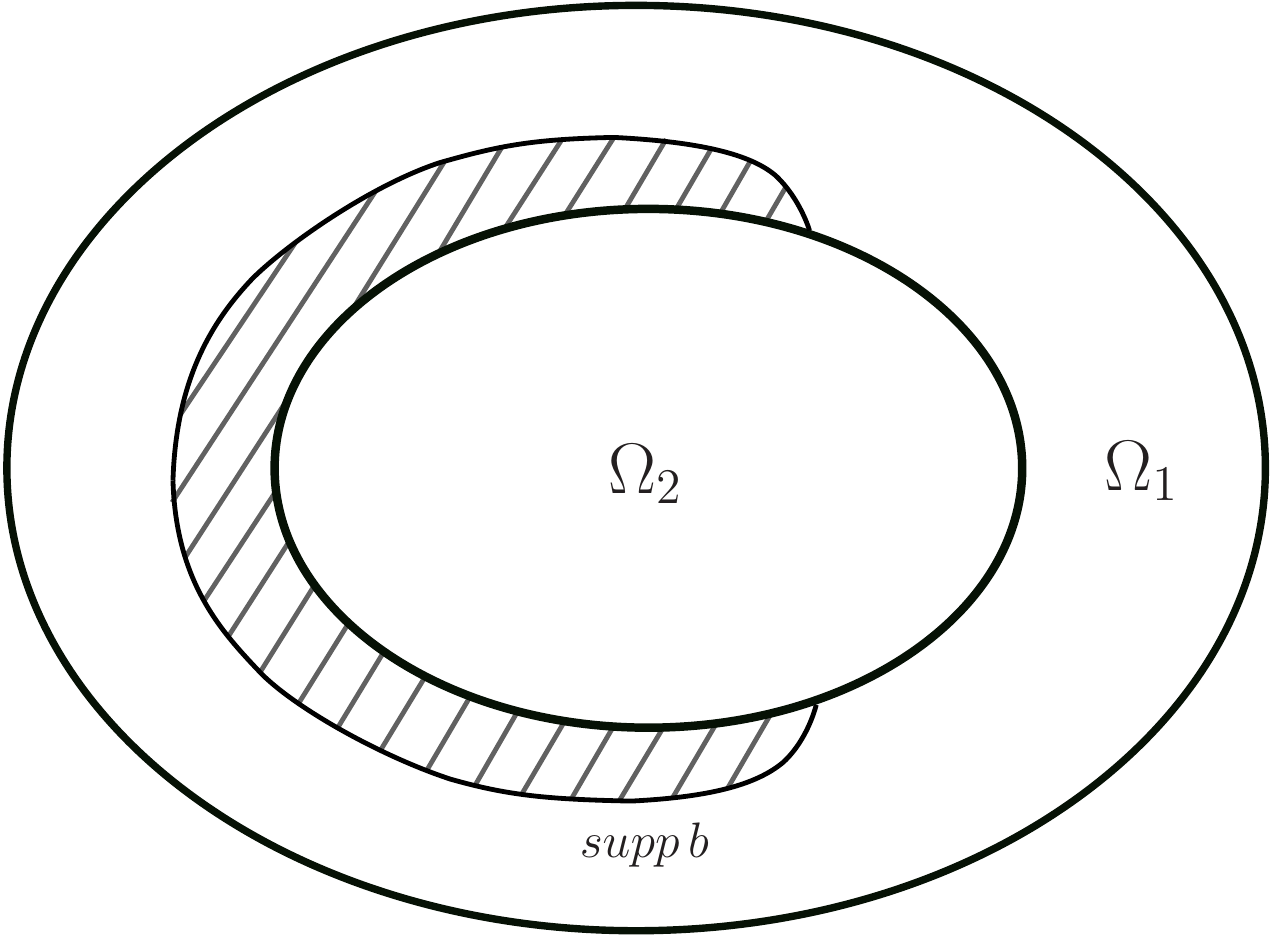} \quad
	\includegraphics[height=4.5cm]{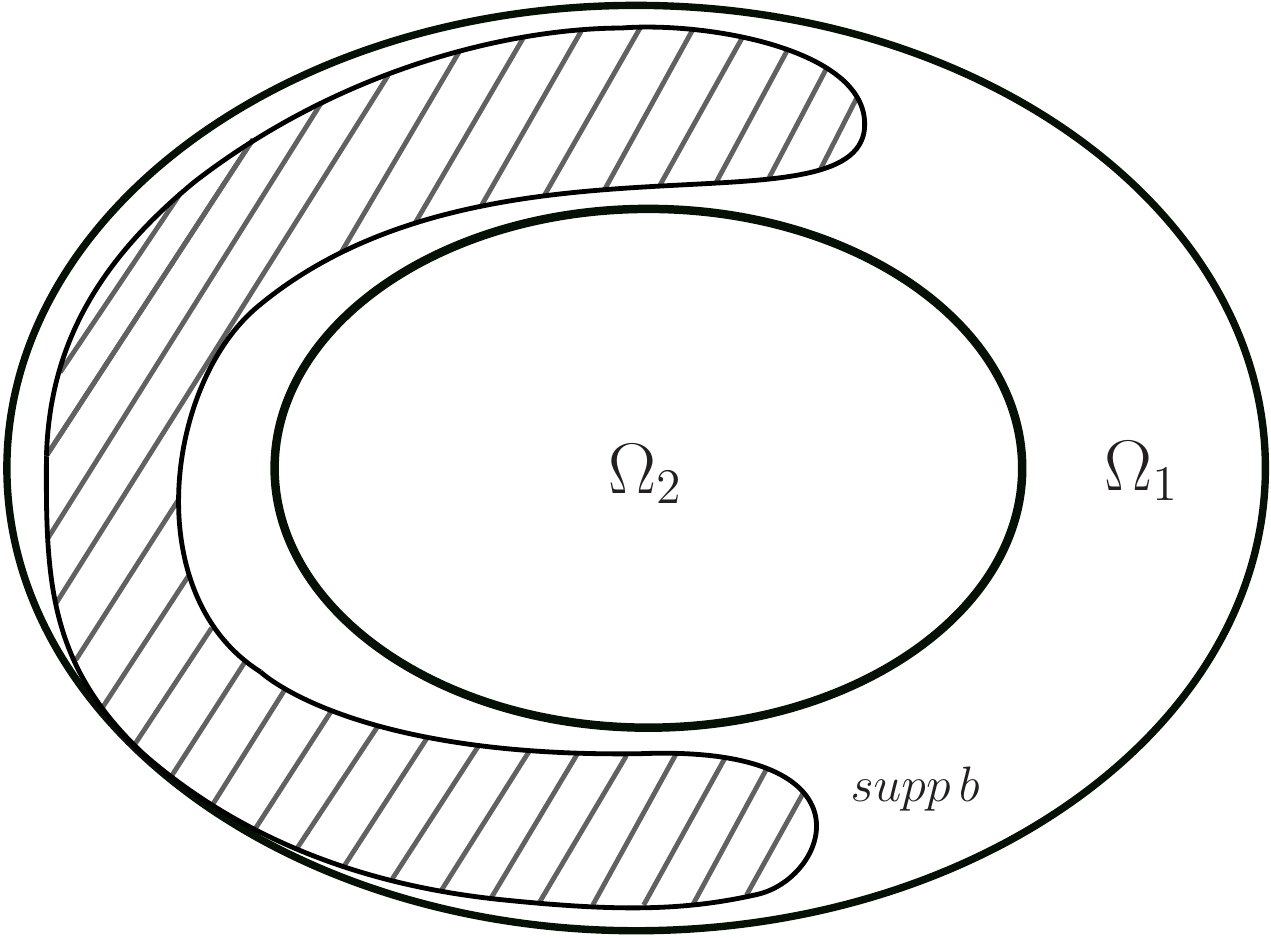}
\end{center}
\caption{\footnotesize{Representation of the spatial domain for \eqref{MP1} in the case where $supp(b)$ touches the interface (left) and the case where it does not (right).}}
\end{figure}

The function $u$ satisfies  on $\Omega_1 \times (0,\infty)$ the equation $(\ref{MP1})_1$ and on $\Omega_1 \times (-\infty,0]$ verifies
\begin{equation}\label{pasthistory}
u(x,-t)=\phi_0 (x,t), \quad (x, t) \in \Omega_1 \times [0,\infty),
\end{equation}
where $\phi_0: \Omega_1 \times  [0,\infty)\rightarrow \mathbb{R}$ is the prescribed past history of $u$.

In addition, $(u,v)$ satisfy the initial data
\begin{equation}\label{id}\begin{array}{c}u(x,0)=u_0(x);~u_t(x,0)=u_1(x), x\in \Omega_1,\\
v(x,0)=v_0(x); v_t(x,0)=v_1(x), x\in \Omega_2.
\end{array}\end{equation}

Note that equation $(\ref{MP1})_1$ can be written as 

$$u_{tt} -k_1\Delta u + k_1\int_0^{\infty}g(s) \textrm{ div} (b(x) \nabla u)(t-s)\,ds =0,$$
or, equivalently,
\begin{equation}\label{equationvisco}
u_{tt} -k_1\textrm{ div}\left[ \nabla u(t) -\int_0^{\infty}g(s)  b(x) \nabla u(t-s)\,ds\right] =0.
\end{equation}

Defining $G(s)= 1- \int_0^s g(\xi) \, d\xi$, the above equation turns out to be 
$$u_{tt} -k_1\textrm{ div}\left[ G(0)\nabla u(x,t) -\int_0^{\infty} G'(s)  b(x) \nabla u(x,t-s)\,ds\right] =0, ~ (x,t) \in\Omega_1 \times (0,\infty),$$
which describes the evolution of the displacement field $u$ in a homogeneous isotropic solid, whose viscoelastic part is localized in the support of $b=b(x)$, $\hbox{supp }b \subset \Omega_1$, and it occupies $\Omega_1$ at rest, see for example Fabrizio et al. \cite{Fabrizio} and the references therein.  

Using the past history framework, introduced by Dafermos in his pioneering paper \cite{Dafermos}, it was possible to treat equation (\ref{equationvisco}) in a different way.

Introducing the change of variables 
\begin{eqnarray}\label{change}
\eta(x,t,s) := u(x,t) - u(x,t-s),
\end{eqnarray}
 we deduce

\begin{align*}
\int_{-\infty}^t g(t-s) \textrm{ div}(b(x)\nabla u(x,s)) ds & = \int_{0}^\infty g(s) \textrm{ div}(b(x)\nabla u(x,t-s)) ds \\
& = \left(\int_{0}^\infty g(s) ds \right) \textrm{ div}(b(x)\nabla u)(x,t)  - \int_{0}^\infty g(s)  \textrm{ div}(b(x)\nabla \eta(x,t,s)) ds. 
\end{align*}
Likewise, the flux condition writes 
\begin{align*}
\int_{-\infty}^t g(t-s) b(x) \dfrac{\d u}{\d n}(x,s) ds & =  \int_{0}^\infty g(s) b(x) \dfrac{\d u}{\d n}(x,t-s) ds \\
& = \left(\int_{0}^\infty g(s) ds \right) b(x) \dfrac{\d u}{\d n}(x,t)  - \int_{0}^\infty g(s) b(x)  \dfrac{\d \eta}{\d n}(x,t,s) ds. 
\end{align*}
Defining
\[
k_0:=\int_{0}^\infty g(s) ds, 
\]
equation (\ref{equationvisco}) translates into the following system:
\begin{equation}\label{change}\qquad
\left\{
\begin{aligned}
& u_{tt} - k_1 \textrm{div }\left[(1-k_0 b(x)) \nabla u + \int_0^\infty g(s) b(x)\nabla \eta(s)\,ds\right]=0, \\
&\eta_t + \eta_s =u_t.
\end{aligned}
\right.
\end{equation}

and the non-autonomous problem  (\ref{MP1}) is transformed into the equivalent autonomous one
\begin{equation}\label{AUP}\qquad
\left\{
\begin{aligned}
& u_{tt} - k_1 \textrm{div }( (1-k_0 b(x)) \nabla u) - k_1 \int_0^\infty g(s) \textrm{div }(b(x)\nabla \eta(s))\,ds=0, & ~\hbox{ in }\Omega_1 \times (0,\infty),\\
&\eta_t + \eta_s =u_t, & ~\hbox{ in } \Omega_1 \times (0,\infty) \times (0,\infty),\\
&v_{tt} -k_2 \Delta v =0, & ~\hbox{ in }\Omega_2 \times (0,\infty),\\
& u=\eta=0, & ~\hbox{ on }\d \Omega\times(0,\infty)\\
& u=v, &  ~\hbox{ on }\d \Omega_2 \times (0,\infty)\\
& k_2 \dfrac{\partial v}{\partial \nu}= k_1 (1-k_0 b(x))\dfrac{\partial u}{\partial \nu}  +k_1 \int_0^\infty g(s)b(x)\dfrac{\partial \eta}{\partial \nu} (s)\,ds, & ~\hbox{ on }\d \Omega_2 \times (0,\infty),\\
& u(x,0)=u_0(x); ~u_t(x,0)=u_1(x), & ~x\in \Omega_1,\\
& v(x,0)=v_0(x); ~v_t(x,0)=v_1(x), & ~x\in \Omega_2,\\
&\eta(x,t,0)=0;~\eta(x,0,s)=\eta_0(x,s)=u_0-\phi_0(s), & ~x\in \Omega_1.
\end{aligned}
\right.
\end{equation}
Throughout this article, we assume that  $1-k_0 b(x) \leq 1, \forall x\in \overline{\Omega}_1$ and therefore $k_1(1-k_0 b(x)) \leq k_2, \forall x\in \overline{\Omega}_1$, whenever $k_1<k_2$. Moreover, 
$$
\eta(x,t,0):=\lim_{s\rightarrow 0_+}\eta(x,t,s)=0, \qquad (x,t)\in \Omega\times(0,\infty).$$

According to Theorem 2.1 and Theorem 2.2 in Dafermos \cite{Dafermos}, the past history function regularity , as well as the other initial data, implies the continuity of the solution regarding the time parameter, in the interval $(-\infty, T]$, for $T>0$.

%

Observe that the energy functional associated to problem (\ref{MP1}) is defined by
\begin{eqnarray*}
E(t)&:=&\frac12\int_{\Omega_1}\left[|u_t(x,t)|^2 + k_1\, |\nabla u(x,t)|^2\right]\,dx\\
&+& \frac12\int_{\Omega_2}\left[|v_t(x,t)|^2 + k_2\, |\nabla v(x,t)|^2\right]\,dx\nonumber
\end{eqnarray*}
and, under this form, the energy decay of \eqref{MP1} is not easy to establish since the sign of 
\begin{eqnarray*}
E'(t) = k_1\int_{-\infty}^t g(t-s) \int_\Omega b(x) \nabla u(x,s)\cdot \nabla u_t(x,t) \,dx ds,~t\geq 0,
\end{eqnarray*}
is difficult to control. Moreover, \eqref{MP1} being non-autonomous, arguing by an observability inequality argument is unlikely to yield the exponential decay of the solution.

On the other hand, using the boundary conditions, the second line of \eqref{AUP} and that $g(s)\rightarrow 0$ as $s\rightarrow \infty$, one obtains that the energy  functional of (\ref{AUP}) defined by
\begin{eqnarray}\label{energy}
E(t)&:=&\frac12\int_{\Omega_1}\left[|u_t(x,t)|^2 + (1-k_0 b(x)) \, |\nabla u(x,t)|^2\right]\,dx + \frac12\int_{\Omega_2}\left[|v_t(x,t)|^2 + k_2\, |\nabla v(x,t)|^2\right]\,dx\nonumber \\
&+& k_1 \int_0^\infty g(s)\int_{\Omega_1} b(x)|\nabla \eta(x,t,s)|^2\,dxds, 
\end{eqnarray}
satisfies the identity 
\begin{eqnarray}\label{energy-identity}
E(t_2) - E(t_1) = \frac{k_1}2 \int_{t_1}^{t_2}\int_0^{+\infty} g'(s)\, \int_{\Omega_1} b(x)|\nabla \eta(x,t,s) |^2\,dxdsdt,
\end{eqnarray}
for all $0\leq t_1 < t_2$. Further imposing that $g$ is decreasing implies that \eqref{energy-identity} is nonpositive for any $0\leq t_1 < t_2$. 

In order to achieve the exponential stability, we are going to establish that it is sufficient to prove that for all $T>T_0,$  there exists a constant $C>0$ such that the following observability inequality holds:
\begin{eqnarray}\label{Obs ineq}\,\,\qquad
E(0)\leq C
\left(\int_0^T\int_{0}^{\infty}(-g'(s)) \int_{\Omega_1}  b(x)|\nabla \eta(x,t,s)|^2 \, dxdsdt\right).
\end{eqnarray}

The exponential stability result is the main goal of the present paper and will be established in section 3 under geometrical assumptions. In the next section we are going to present the notations, the analytical hypothesis and the functional spaces which will be used throughout the paper as well as the well-posedness.

\subsection{Literature overview and main contribution of the present article}

There are several articles in connection with the controllability and stabilization of wave transmission problems.
Initially, we would like to mention some important papers related to the exact controllability of transmission problems associated with the wave equations. The question of boundary controllability in problems of transmission has been considered by several authors. In particular Lions \cite{LionsBook1} considered the system in the special case of two wave equations, namely (using Lions' notations),
\begin{eqnarray*}
	\begin{cases}
		\partial_t^2 y_1 - a_1 \Delta y_1 =0 \quad \hbox{ in }\Omega_1 \times (0,T),\\
		\partial_t^2 y_2 - a_2 \Delta y_2 =0 \quad \hbox{ in }\Omega_2 \times (0,T),
	\end{cases}
\end{eqnarray*}
where $\Omega, \Omega_1$ are bounded open connected sets in $\mathbb{R}^n$ with smooth boundaries $\Gamma$ and $\Gamma_1$ respectively such that $\overline{\Omega_1} \subset \Omega$ and $\Omega_2:= \Omega\backslash \Omega_1$ whose boundary is $\Gamma_2 := \Gamma \cup \Gamma_1$. Here, $a_i>0$ ($i=1,2$) and $\Delta$ is the ordinary Laplacian in $\mathbb{R}^n$,
\begin{eqnarray*}
	&&y_2 = v \hbox{ on }\Sigma=\Gamma \times (0,T), v \hbox{ is the control},\\
	&& y_1=y_2,~a_1 \partial_\nu y_1= a_2 \partial_\nu y_2\hbox{ on }\partial \Omega_i,~i=1,2,\\
	&& y_i|_{t=0}= \partial_t y_i|_{t=0}=0 \hbox{ on }\Omega_i,~i=1,2.
\end{eqnarray*}

Assuming that $\Omega_1$ is star shaped with respect to some point $x_0\in \Gamma_1$ and setting $\Gamma(x_0):=\{x\in \Gamma:(x-x_0)\cdot\nu(x)>0\}$, $\Sigma(x_0):=\Gamma(x_0) \times (0,T)$ where $\nu$ is the unit outer normal to $\Gamma$, Lions proved the exact boundary controllability assuming that $a_1>a_2$ and for $T>T(x_0)=2R(x_0)/\sqrt{a_2}$ and $R(x_0)=\max_{x\in \overline{\Omega_2}}|x-x_0|$.

Later on Lagnese \cite{26} generalized Lions \cite{LionsBook1} by considering transmission problems for general second order linear hyperbolic systems having piecewise constant coefficients in a bounded, open connected set with smooth boundary and controlled through the Dirichlet boundary condition. It is showed that such a system is exactly controllable in an appropriate function space provided the interfaces where the coefficients have a jump discontinuity are all star shaped with respect to one and the same point and the coefficients satisfy a certain monotonicity condition.

Another interesting generalization of Lions \cite{LionsBook1} has been considered by Liu \cite{27}. In this paper the author addresses the problem of control of the transmission wave equation. In particular, he considers the case where, due to total internal reflection of waves at the interface, the system may not be controlled from exterior boundaries. He shows that such a system can be controlled by introducing both boundary control along the exterior boundary and distributed control near the transmission boundary and give a physical explanation why the additional control near the transmission boundary might be needed for some domains.

We also would like to quote the papers due to Nicaise \cite{28}, \cite{29} in which the author discusses the problem of exact controllability of networks of elastic polygonal membranes. The individual membranes are assumed to be coupled either at a vertex or along a whole common edge.  The author then derives energy estimates for regular solutions, which are then, by transposition, extended to weak solutions. As usual, direct and inverse inequalities of the type shown in these articles establish a norm equivalence on a certain space (classically named $F$), the completion of which is the space in which the HUM-principle of Lions works. The space $F'$ then contains the null-controllable initial data. This space is weak enough to correspond to $L^2$-boundary controls along exterior edges satisfying sign conditions with respect to energy multipliers, to such controls along Dirichlet-edges, and, more importantly, to $H^1$-vertex controls at those vertices which are responsible for severe singularities. The corresponding solutions, for $(u_0,u_1)\in H\times V'$ with rather weak regularity $(C(0,T,D(A)'))$, are then shown to be null-controllable in a canonical finite time.

Another very nice paper that we would like to quote is the work of Miller \cite{Miller}, which although not related to controllability is very closed to the subject of investigation . This article deals with the propagation of high-frequency wave solutions to the scalar wave and  Schr\"odinger equations. The results are formulated in terms of semiclassical measures (Wigner measures). The propagation is across a sharp interface between two inhomogeneous media. The author proves a microlocal version of Snell-Descartes's law of refraction which includes diffractive rays. Moreover, a radiation phenomenon for density of waves propagating inside an interface along gliding rays is illustrated. The measures of the traces of the solutions of the corresponding partial differential equations enable the author to derive some propagation properties for the measure of the solutions.   

Finally we would like to mention the recent papers due to Gagnon \cite{LG} and Astudillo et al \cite{Astudillo}. In the first one \cite{LG} the author considers waves traveling in two different mediums each endowed with a different constant speed of propagation. At the interface between the two mediums, the refraction of the rays of the optic geometry
is described by the Snell-Descartes's law. The author introduced a geometrical construction that yields sufficient geometrical conditions under the hypothesis that $\Omega$ and $\Omega_2$ and that the boundary observability region is given by $\Gamma=\Gamma(x_0)$. More precisely, the geometrical construction allows to keep track of the propagation of the generalized bicharacteristics as they encounter the interface. This is the critical issue since, from interference phenomenon at the interface, concentration of energy on outgoing rays from the interface is possible, as, for every ray incoming at the interface, there exists a reflected ray, a transmitted ray and an interference ray if the Snell-Descartes's law is not vacuous. In particular, the classical \textit{Geometrical Control Condition} is not appropriate in this setting. Roughly speaking, the geometrical construction found in \cite{LG}, using microlocal defect measure, uses an iterative process to propagate the observability region $\Gamma(x_0)$ to a subset $\Gamma_2 \subset \d \Omega_2$ of the interface that is observable, meaning that every ray encountering transversally $\Gamma_2$ is observed. Moreover, it is shown that, under the geometrical assumptions, $\Gamma_2$ satisfies GCC for $\Omega_2$. Once the iterative geometrical construction is over, one is left with a part of the domain $\Omega_1^f \subset \Omega_1$ in which one is not able to conclude on the observability of the generalized bicharacteristics propagating in $\Omega_1^f$. The sufficient conditions is then expressed in terms of a uniform escaping geometry condition on $\Omega_1^f$, requiring every rays in $\Omega_1^f$, as well as the transmitted rays in $\Omega_2$, to propagate "directly" outside $\Omega_1^f$ and toward the observability region. This condition is similar to asking that there are no trapped rays in $\Omega_1^f$, but is more subtle in the sense that one has to consider $\Omega_2$ not as an obstacle but a region where transmission occurs. Therefore, one also has to make sure that the rays propagating in $\Omega_2$ propagate uniformly toward the observability region as well to prevent the interference phenomenon to occur. The geometrical proof of the present paper uses this construction to derive sufficient geometrical conditions for distributed controls from boundary controls. As we shall see, the geometrical conditions relies on the geometrical construction for the boundary controls as well as the additional non trapping condition for the rays propagating only in $\Omega_1$. 

In the second aforementioned one  \cite{Astudillo} the authors  study the exact boundary controllability of a generalized wave equation in a non smooth domain with a nontrapping obstacle. In the more general case, this work contemplates the boundary control of a transmission problem admitting several zones of transmission. The result is obtained using the technique developed by David Russell, taking advantage of the local energy decay for the problem, obtained through the Scattering Theory as used by Vodev \cite{23, 33}, combined with a powerful trace Theorem due to Tataru \cite{4}.

On the other hand, in what concerns the stabilization of wave transmission problems associated to an internal  frictional dissipation, we would to quoted the following paper \cite{Cavalcanti} due to Cavalcanti et al and references therein. In this paper, the authors obtain very general decay rate estimates associated to a wave-wave transmission problem subject to a nonlinear damping locally distributed and they present explicit decay rate estimates of the associated energy. In addition, they implement a precise and efficient code to study the behavior of the transmission problem when $k_1\ne k_2$ and when one has a nonlinear frictional dissipation $g(u_t )$. More precisely, they aim to numerically check the general decay rate estimates of the energy associated to the problem established in first part of the paper. It is worth mentioning the paper of Cardoso and Vodev \cite{23} . In this paper the authors  study the transmission problem in bounded domains with dissipative boundary conditions. Under some natural assumptions, they prove uniform bounds of the corresponding resolvents on the real axis at high frequency and, as a consequence, they obtain regions free of eigenvalue.  As an application,
the authors get exponential decay of the energy of the solutions of the corresponding mixed boundary value problems.

Regarding the stabilization of wave transmission problems associated to a viscoelastic effects, as far as we are concerned, the unique paper of the literature published so far is the following one \cite{AMO}. This paper goes in the same direction of the present one with two drawbacks: (i) $b(x) =1$  which implies that a full damping is in place in $\Omega_1$, ~
(ii) In \cite{AMO} the authors consider an additional frictional damping acting on a collar of the transmission zone which characterizes an over damping. From the above, the main contribution of the present article is to generalize substantially the aforementioned article by removing the excess of viscoelastic and frictional dissipations. For this purpose refined arguments of micro local analysis are taken into account.

\section{Assumptions, Functional Spaces and The Well-Posedness Result}

\begin{ass}\label{ass:1}

 $\mathbf{(i)}$ $b\in C^\infty(\Omega_1) \cap C^0 (\overline{\Omega}_1)$ is a nonnegative function 
such that there is a positive distance between $\hbox{supp }b$ and $\d \Omega_2$.\\	
$\mathbf{(ii)}$ $  g\in L^1([0,\infty))\cap C^1([0,\infty)) $  is a positive non-increasing function satisfying
\begin{eqnarray}\label{solid}
	l:= 1-k_0||b||_{L^{\infty}(\Omega_1)} >0,
	\end{eqnarray}
for some positive constant $C>0$, where	
\begin{equation}\label{solid0}
\int_{0}^{\infty} g(s) \, ds :=k_0.\\
\end{equation}
\end{ass}

Let us define
$$
\mathbb{H}^1 = H^1(\Omega_1) \times H^1(\Omega_2);~\mathbb{L}^2= L^2(\Omega_1) \times L^2(\Omega_2),$$
$$\mathbb{H}_{\d \Omega}^1= \left\{(u,v)\in \mathbb{H}^1; u|_{\d \Omega}=0 \mbox{ and } u=v\hbox{ on }\d \Omega_2\right\},
$$
where $\mathbb{H}^1_{\d \Omega}$ is a Hilbert space endowed with the inner product
\begin{eqnarray*}
\left((u_1,v_1), (u_2,v_2) \right)_{\mathbb{H}_{\d \Omega}^1}= {k_1}\int_{\Omega_1} (1-k_0 b(x))\nabla u_1
\cdot \nabla {u_2}\,dx + k_2\int_{\Omega_2}\nabla v_1
\cdot \nabla {v_2}\,dx,
\end{eqnarray*}
which is equivalent to the usual norm of $H^1_0 (\Omega)$ taking (\ref{solid}) into account.

We shall introduce the notations
\begin{eqnarray*}
|| u||_1^2 = \int_{\Omega_1} |u(x)|^2\,dx ~\hbox{ and }||v||_2^2= \int_{\Omega_2} |v(x)|^2\,dx,
\end{eqnarray*}
where $(u,v)\in \mathbb{L}^2$ and
\begin{eqnarray*}
\left((u_1,v_1), (u_2,v_2) \right)_{\mathbb{L}^2}= (u_1,u_2)_1 +  (v_1,v_2)_2 = \int_{\Omega_1} u_1(x)u_2(x)\,dx
 +\int_{\Omega_2} v_1(x)v_2(x)\,dx,
\end{eqnarray*}
with $(u_1,v_1), (u_2,v_2)\in \mathbb{L}^2$.

We define the space
\begin{eqnarray*}
L_g^2( \mathbb{R}^+;V)=\left\{ \eta; \int_0^\infty g(s) ||\eta(s)||_V^2\,ds<+\infty\right\},
\end{eqnarray*}
where
\begin{eqnarray*}
V=\{w\in L^2(\Omega_1); \sqrt{b(x)}\nabla w \in L^2(\Omega_1), w=0\hbox{ on }\d \Omega\},
\end{eqnarray*}
which is a Hilbert space, endowed with the following inner product
\begin{eqnarray*}
(\eta,\xi)_{L^2_g}=k_1\int_0^\infty g(s) \int_{\Omega_1} b(x) \nabla \eta \cdot \nabla {\xi}\,dxds + k_1\int_0^\infty g(s) \int_{\Omega_1} \eta {\xi}\,dxds  ,~\hbox{ for all }\eta,\xi\in L_g^2( \mathbb{R}^+;V).
\end{eqnarray*}

The hypothesis imposed on function $b$ are fundamental to prove that $V$ is well defined and, in addition, is a Hilbert space.  As a consequence, it makes sense to consider the trace of order zero of any function $u$ belonging to this space.

Furthermore, we consider
$$D(T)= \{\eta; \eta , \eta_s \in L_g^2( \mathbb{R}^+;V),\eta(0)=0\}$$
and the operator \begin{equation*}\begin{array}{rccc}T :& D(T)\subset L_g^2( \mathbb{R}^+;V)& \longmapsto&L_g^2( \mathbb{R}^+;V)\\& \eta& \rightarrow& T(\eta) = -\eta_s.\end{array}\end{equation*}

Finally, we introduce the state space
$$ X = \h\times\L\times L_g^2( \mathbb{R}^+;V).$$

Defining the linear operator
\begin{equation}\label{operator} \mathcal{A}:D(\mathcal{A})\subset X \longrightarrow X \end{equation}
$$ \mathcal{A} U =\left[\begin{array}{c}u_2\\v_2\\ k_1 \textrm{div }( (1-k_0 b(x)) \nabla u_1) + k_1 \int_0^\infty g(s) \textrm{div }(b(x)\nabla \eta(s))\,ds \\k_2 \Delta v_1\\ u_2 - \eta_s\end{array}\right],$$
where $U=((u_1,v_1),(u_2,v_2),\eta)^T$ and
$$\begin{array}{r} \D( \mathcal{A}) =  \Big\{u=((u_1,v_1),(u_2,v_2),\eta)\in \mathcal{H};  (u_2,v_2) \in \mathbb{H}_{\d \Omega}^1, \eta \in D(T),\\ \left(\textrm{div }( (1-k_0 b(x)) \nabla u_1) + \int_0^\infty g(s) \textrm{div }(b(x)\nabla \eta(s))\,ds,\Delta v_1\right)\in \L \\ \mbox{ and }
k_2 \frac{\partial v_1}{\partial \nu} = k_1 (1-k_0 b(x))\dfrac{\partial u_1}{\partial \nu}  +k_1 \int_0^\infty g(s)b(x)\dfrac{\partial \eta}{\partial \nu} (s)\,ds,  ~\hbox{ on }\d \Omega_2 \Big\}\end{array},$$
 problem (\ref{AUP}) is equivalent to the Cauchy Problem
\begin{equation}\label{Cauchy-problem}
\left\{\begin{array}{l}
\displaystyle\frac{d}{dt} U (t) = \mathcal{A} U(t), \quad t>0, \medskip \\
U(0)=U_0,
\end{array}\right.
\end{equation}
where $ U_0=((u_0,v_0),(u_1,v_1),\eta_0).$

\subsection{Well-posedness result}

Now we are in a position to establish the well-posedness result for \eqref{Cauchy-problem}, which ensures that problem \eqref{AUP} is globally well posed.

\begin{theorem}[Global Well-posedness]
 	\label{theo-existence} Under {\bf Assumption \ref{ass:1}} we have
 	\begin{itemize}
 		\item[$(i)$]  If $U_0\in X $, then problem \eqref{Cauchy-problem}  has a unique mild
 		solution $U\in C([0,\infty),X)$.
 		
 		\medskip
 		
 		\item[$(ii)$]  If  $U_0\in D(\mathcal{A})$, then the above mild
 		solution $U$ is regular with
 		$$
 		U\in C([0,\infty),D({\mathcal{A}}))\cap  C^{1}([0,\infty),X).
 		$$
\end{itemize}
\end{theorem}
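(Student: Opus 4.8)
The plan is to prove that the operator $\A$ generates a $C_0$-semigroup on $X$ via the Lumer--Phillips theorem, after which $(i)$ and $(ii)$ are the standard consequences of linear semigroup theory: for $U_0\in X$ the mild solution is $U(t)=S(t)U_0\in C([0,\infty),X)$, and for $U_0\in D(\A)$ it is the classical solution in $C([0,\infty),D(\A))\cap C^1([0,\infty),X)$. Thus the whole argument reduces to verifying the hypotheses of Lumer--Phillips: that $D(\A)$ is dense in $X$, that $\A$ is dissipative up to a lower-order term, and that $\lambda I-\A$ is surjective for some $\lambda>0$.

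First I would record that $D(\A)$ is dense in $X$, which is routine once one notes that sufficiently regular fields satisfying the transmission and flux conditions, with $\eta$ compactly supported in $s$ and vanishing at $s=0$, are dense in each factor of $X=\mathbb{H}^1_{\d\Omega}\times\L\times L^2_g(\R^+;V)$. For the dissipativity I would compute $\mathrm{Re}\,(\A U,U)_X$ for $U=((u_1,v_1),(u_2,v_2),\eta)\in D(\A)$ and integrate by parts in $\Omega_1$ and $\Omega_2$. The first-order coupling between the $\mathbb{H}^1_{\d\Omega}$ and $\L$ components cancels exactly; the boundary integrals over $\d\Omega$ vanish because $u_2=0$ there, and those over the interface $\d\Omega_2$ cancel thanks to the matching condition $u_2=v_2$ together with the flux transmission condition built into $D(\A)$. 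Integrating the memory term by parts in $s$ and using $\eta(0)=0$ and $g(s)\to0$ produces $\tfrac{k_1}{2}\int_0^\infty g'(s)\int_{\Omega_1}b(x)|\nabla\eta|^2\,dx\,ds\le0$, in agreement with \eqref{energy-identity}. The only term without a sign is $k_1\int_0^\infty g(s)\int_{\Omega_1}u_2\,\eta\,dx\,ds$, arising from the zeroth-order part of the $L^2_g$ inner product introduced to make $V$ complete; by Cauchy--Schwarz and Young's inequality it is bounded by $\omega\|U\|_X^2$ for some $\omega>0$, so $\A-\omega I$ is dissipative. This is harmless, since Lumer--Phillips for quasi-contraction semigroups still applies.

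The core of the proof is the maximality condition, which I expect to be the main obstacle: given $F=((f_1,f_2),(h_1,h_2),\varphi)\in X$ and $\lambda>\omega$, I would solve $(\lambda I-\A)U=F$. The first block yields $u_2=\lambda u_1-f_1$ and $v_2=\lambda v_1-f_2$, while the last equation $\eta_s+\lambda\eta=u_2+\varphi$ with $\eta(0)=0$ is an ODE in $s$ solved explicitly by $\eta(s)=\lambda^{-1}(1-e^{-\lambda s})u_2+\int_0^s e^{-\lambda(s-\tau)}\varphi(\tau)\,d\tau$. Substituting these into the two momentum equations eliminates $u_2,v_2,\eta$ and leaves an elliptic transmission system for $(u_1,v_1)$, namely $\lambda^2 u_1-k_1\,\mathrm{div}\big([(1-k_0b)+\lambda\kappa_\lambda b]\nabla u_1\big)=\tilde h_1$ in $\Omega_1$ and $\lambda^2 v_1-k_2\Delta v_1=\tilde h_2$ in $\Omega_2$, where $\kappa_\lambda:=\int_0^\infty g(s)\lambda^{-1}(1-e^{-\lambda s})\,ds>0$ and $\tilde h_1,\tilde h_2$ collect the data. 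I would solve this by Lax--Milgram on $\mathbb{H}^1_{\d\Omega}$: the associated bilinear form is continuous and coercive, since $(1-k_0b)+\lambda\kappa_\lambda b\ge l>0$ by \eqref{solid} and the $\lambda^2$ terms are positive, and crucially the interface condition $u_1=v_1$ and the flux identity of $D(\A)$ appear as the natural boundary conditions of the variational formulation, so no constraint is lost.

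Finally, elliptic regularity upgrades the weak solution so that the divergence terms and $\Delta v_1$ belong to $\L$ and the flux identity holds on $\d\Omega_2$, giving $U\in D(\A)$. The delicate points are precisely absorbing the memory term into the elliptic operator (producing the $\lambda$-dependent coefficient $(1-k_0b)+\lambda\kappa_\lambda b$) and confirming that the transmission and flux conditions are recovered naturally while tracking enough regularity to land in $D(\A)$. With density, quasi-dissipativity, and maximality established, Lumer--Phillips furnishes the semigroup, and the two assertions follow.
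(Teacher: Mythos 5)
Your proposal is correct and follows essentially the same route as the paper: quasi-dissipativity of $\mathcal{A}-\omega I$ (with the memory term integrated by parts in $s$ and the unsigned coupling $k_1\int_0^\infty g(s)\int_{\Omega_1}u_2\,\eta\,dx\,ds$ absorbed into $\omega$), followed by maximality via an explicit ODE solve for $\eta$, elimination, and Lax--Milgram for the resulting elliptic transmission system with the modified coefficient $(1-k_0b)+c^*b\ge l>0$, and finally the Lumer--Phillips/maximal-monotone machinery. The only cosmetic difference is that you reduce to an elliptic problem for $(u_1,v_1)$ whereas the paper eliminates in favour of $(u_2,v_2)$; the bilinear form and coercivity argument are identical.
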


 \begin{proof}
	
Let $U=((u_1,v_1),(u_2,v_2),\eta)\in D({\mathcal{A}})$ and $\omega >0$ verifying $\omega>\max\left\{\dfrac{k_1 k_0}{2},\dfrac{1}{2}\right\}>0$, then

\begin{align*}
((\mathcal{A} &-\omega I) U,U)_{X} \\
&=k_1\int_{\Omega_1}(1-k_0b(x))\nabla u_1\nabla u_2 dx -k_1 \omega
\int_{\Omega_1}(1-k_0b(x))|\nabla u_1|^2dx\\
&+\int_{\Omega_1}\left\{k_1\hbox{div}[(1-k_0a(x))\nabla u_1]+k_1\int_0^\infty g(s)
\hbox{div}(b(x)\nabla \eta)ds\right\}u_2 dx  + k_2\int _{\Omega_2}  \Delta v_1 v_2 dx\\ 
&+ k_2\int_{\Omega_2}\nabla v_1
\cdot \nabla {v_2}\,dx -\omega k_2 \int_{\Omega_2} |\nabla v_1|^2dx - \omega \int_{\Omega_1} | u_2|^2dx- \omega \int_{\Omega_2} | v_2|^2dx\\
&-k_1 \int_0^\infty \int_{\Omega_1}g(s)\eta_s\eta
dxds- k_1 \int_0^\infty \int_{\Omega_1}g(s){b(x)}\nabla \eta_s\nabla \eta
dxds+ k_1 \int_0^\infty \int_{\Omega_1}g(s)u_2 \eta dxds
\\
&+ k_1 \int_0^\infty \int_{\Omega_1}g(s){b(x)}\nabla u_2\nabla \eta dxds - k_1 \omega \int_0^\infty \int_{\Omega_1}g(s)|\eta|^2
dxds {- k_1\omega \int_0^\infty \int_{\Omega_1}g(s)b(x)|\nabla
	\eta|^2dxds}.
\end{align*}

Considering that for every element $U=((u_1,v_1),(u_2,v_2),\eta)\in D({\mathcal{A}})$  we have $u_1= u_2 =0$ on $\d \Omega$, $u_1=v_1$, $u_2 =v_2$ and $ k_2 \dfrac{\partial v_1}{\partial \nu}= k_1 (1-k_0 b(x))\dfrac{\partial u_1}{\partial \nu}  +k_1 \int_0^\infty g(s)b(x)\dfrac{\partial \eta}{\partial \nu} (s)\,ds$ on $\d \Omega_2$, it yields

\begin{equation}\label{dissipative}
((\mathcal{A} -\omega I) U,U)_{X} \\
\leq -(\omega -\dfrac{k_1 k_0}{2}) \int_{\Omega_1} | u_2|^2dx - k_1(\omega - \dfrac{1}{2}) \int_0^\infty \int_{\Omega_1}g(s)|\eta|^2
dxds \leq 0.
\end{equation}

Defining $\mathcal{B}:=\mathcal{A} -\omega I $, in light of (\ref{dissipative}), we conclude that $\mathcal{B}$ is a dissipative operator, that is, $-\mathcal{B}$ is a monotone operator.  Now, we need to prove that  $R(I-\mathcal{B})=X$, or equivalently,
$R((1+\omega)I-\mathcal{A})=X.$

\noindent Indeed, given  $(f_1, f_2, f_3)\in X$ we shall prove that there exists $((u_1,v_1),(u_2,v_2),\eta)\in D(\mathcal{A})$ such that
\begin{equation}
\left\{\begin{array}{l} (1+\omega)(u_1,v_1)-(u_2,v_2)=f_1:= (f_{11}, f_{12}),\label{3.2}\\
(1+\omega)(u_2,v_2)- k_1(\{\hbox{div}[(1-k_0b(x))\nabla u_1]+k_1 \int_0^\infty g(s) \hbox{div}
[b(x)\nabla\eta]ds\}, k_2 \Delta v_1) =f_2 := (f_{21}, f_{22}),\\
(1+\omega)\eta+\eta_s- u_2=f_3.
\end{array} \right.
\end{equation}

Using equation $(\ref{3.2})_3$ we, formally, obtain
\begin{equation}\label{3.5}
\eta(s)=\int_0^s
f_3(\tau)e^{(1+\omega)(\tau-s)}d\tau+\dfrac{u_2}{1+\omega}(1-e^{-(1+\omega)s})\end{equation}

and observing equation $ (\ref{3.2})_1$ we conclude that
\begin{equation}\label{3.6}
\left\{\begin{array}{l} u_1=\dfrac{1} {1+\omega}(f_{11} +u_2)\\
 v_1=\dfrac{1} {1+\omega}(f_{12} +v_2).
\end{array} \right.
\end{equation}

Replacing (\ref{3.5}) and (\ref{3.6}) in $(\ref{3.2})_2$ it derives

\begin{align}\label{u2}
(1+\omega)u_2& -\dfrac{k_1}{1+\omega}\hbox{div}[(1-k_0b(x))+ c^* b(x)]\nabla
u_2\\ \nonumber
&=f_{21}+\dfrac{k_1}{1+\omega}\hbox{div}[(1-k_0b(x))\nabla
f_{11}]+k_1\int_0^\infty g(s) \int_0^s
e^{(1+\omega)(\tau-s)} \hbox{div}[b(x)\nabla f_3(\tau)]d\tau
ds,
\end{align}
where $ c^* = \int_0^\infty g(s)(1-e^{-(1+\omega)s})ds$, and

\begin{eqnarray}\label{v2}
(1+\omega)v_2-\dfrac{k_2}{1+\omega}\Delta v_2 = f_{22}+\dfrac{k_2}{1+\omega} \Delta f_{12}.
\end{eqnarray}

The above two identities are the motivation to define the bilinear form, which is continuous and coercive on $\h\times \h$ 
\begin{equation}\label{bilform}
\begin{array}{lcl}
 B((z_1,w_1),(z_2,w_2))
&=&\displaystyle\int_{\Omega_1}(1+\omega) z_1 z_2\,dx+\dfrac{k_1}{1+\omega}\int_{\Omega_1}[(1-k_0b(x))+c^*b(x)] \nabla z_1 \cdot \nabla z_2\,dx\\
&=&\displaystyle\int_{\Omega_2}(1+\omega) w_1 w_2\,dx+\dfrac{k_2}{1+\omega}\int_{\Omega_2} \nabla w_1 \cdot \nabla w_2\,dx
\end{array}
\end{equation}
for all $ (z_1,w_1),(z_2,w_2)\in \h$; and also the motivation to define the following linear and continuous operator:
$$\begin{array}{rccc} \mathcal{L}: &\h & \longrightarrow &\R \\ &(w,z)&\mapsto& \langle\mathcal{L}, (w,z)\rangle \end{array}$$
given by
\begin{align}
 \langle\mathcal{L}, (w,z)\rangle &= \int_{\Omega_1} f_{21} w\, dx - \dfrac{k_1}{1+\omega}\int_{\Omega_1}(1-k_0b(x))\nabla f_{11} \cdot \nabla w \,dx\\
&- k_1\int_0^\infty g(s) \int_0^s e^{(1+\omega)(\tau-s)} \int_{\Omega_1}b(x)\nabla f_3(\tau) \cdot \nabla w \, dx d\tau ds\\
&+ \int_{\Omega_2} f_{22} z\, dx - \dfrac{k_2}{1+\omega}\int_{\Omega_2} \nabla f_{12} \cdot \nabla z \,dx.
\end{align}

Using the Lax-Milgran Theorem, there exists a unique $(u_2,v_2) \in \h$ satisfying

$$B((u_2,v_2), (w,z)) = \langle \mathcal{L}, (w,z)\rangle, \,\,\forall (w,z) \in \h.$$

So, $(u_2,v_2)$ verifies (\ref{u2}) and (\ref{v2}).  Defining $u_1$ and $v_1$ as in (\ref{3.6}) and $\eta$ as in (\ref{3.5}), we conclude that $((u_1,v_1),(u_2,v_2),\eta)\in \D(\mathcal{A})$ and it satisfies (\ref{3.2}), which proves that $R(I-\mathcal{B})=X$.  Then, $-\mathcal{B}$ is a maximal monotone operator and $D(\mathcal{B})$ is dense in $\h$.  Consequently, $\mathcal{B}$ is m-dissipative.

Recalling the theory  of linear semigroups (see e.g. Pazy \cite{Pazy}),   we conclude
the proof of Theorem \ref{theo-existence}.

\end{proof}

\section{The Exponential Stability}
In order to prove the observability inequality, we need the additional hypothesis
\begin{ass}\label{ass:2}
There exists $c>0$ such that $g(s) \leq -c g'(s)$.
\end{ass}
Assumption \ref{ass:1} ensures that the propagation speed is positive in \eqref{AUP}. Assumption \ref{ass:2} is classical in the study of the exponential decay of the energy with a memory term. 

We shall prove the following, which relies on assumptions and construction to be presented below.
\begin{theorem}\label{main}
Assume $k_2>k_1$ and that Assumption 2.1, 3.1, \ref{x0} and \ref{weakgcc} hold. Moreover, assume that $\Omega$ and $\Omega_2$ are strictly convex and that $\Omega_1^f$ satisfies the uniformly escaping geometry condition. Then there exists $\lambda>0$ and $C>0$ such that for all initial data 
\[
E(t)\leq C e^{-\lambda t} E(0).
\]
\end{theorem}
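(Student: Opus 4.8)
The plan is to deduce the exponential decay from the observability inequality \eqref{Obs ineq} and then to establish \eqref{Obs ineq} itself by a microlocal contradiction argument. For the first, elementary, step I would use the energy identity \eqref{energy-identity}, which for every $T>0$ gives
\[
E(0)-E(T)=\frac{k_1}{2}\int_0^T\int_0^\infty(-g'(s))\int_{\Omega_1}b(x)|\nabla\eta(x,t,s)|^2\,dx\,ds\,dt .
\]
Hence \eqref{Obs ineq} reads $E(0)\le C'\,(E(0)-E(T))$ for all $T>T_0$, which forces $E(T)\le(1-1/C')E(0)$ with $1-1/C'\in[0,1)$. Since the system \eqref{Cauchy-problem} is autonomous, applying this to the solution translated by $mT$ yields $E((m+1)T)\le(1-1/C')E(mT)$, and iterating together with the monotonicity of $E$ produces $E(t)\le Ce^{-\lambda t}E(0)$ with $\lambda=-\tfrac1T\log(1-1/C')>0$. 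This reduction is standard once \eqref{Obs ineq} is available.

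The substance of the proof is \eqref{Obs ineq}, which I would argue by contradiction. If it fails for some $T>T_0$, there is a sequence of finite-energy solutions $U^j=((u^j,v^j),(u^j_t,v^j_t),\eta^j)\in X$ with $E^j(0)=1$ while the right-hand side of \eqref{Obs ineq} tends to $0$. Assumption \ref{ass:2}, i.e. $g(s)\le -c\,g'(s)$, transfers this to the vanishing of $\int_0^T\int_0^\infty g(s)\int_{\Omega_1}b|\nabla\eta^j|^2$, so the whole memory component of the energy is asymptotically negligible on $[0,T]$. Through the transport equation $\eta_t+\eta_s=u_t$ of \eqref{AUP} and the localization of the weight $b$, this controls the wave part on $\hbox{supp }b\subset\Omega_1$. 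Extracting a weak limit $\bar U$, one checks that $\bar U$ is a finite-energy solution of \eqref{AUP} whose dissipation vanishes identically, so that $\bar\eta\equiv0$ and $\bar u$ is undamped on $\hbox{supp }b$; a Tataru-type unique continuation argument \cite{4}, together with the transmission conditions \eqref{fronteira}, then forces $\bar U\equiv0$.

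It remains to preclude a concentration of energy at high frequency, which is the core of the matter. To the sequence $U^j$, weakly convergent to $0$, I would associate a microlocal defect measure $\mu$, carried by the characteristic sets of the two wave operators in $\Omega_1$ and $\Omega_2$. The vanishing dissipation gives $\mu=0$ over $\hbox{supp }b$, and the invariance of $\mu$ under the generalized bicharacteristic flow---Melrose--Sj\"ostrand propagation, valid up to the boundary under the $C^k$ regularity and the no-infinite-order-contact hypothesis on $\d\Omega$ and $\d\Omega_2$---shows that $\mu$ vanishes along every generalized ray meeting $\hbox{supp }b$. Concluding $\mu\equiv0$ (whence $U^j\to0$ strongly, contradicting $E^j(0)=1$) is exactly where the geometry enters: propagating $\mu$ across the interface $\d\Omega_2$ requires tracking the reflected, transmitted and, in the regime $k_2>k_1$, interference and gliding rays through the Snell--Descartes analysis of \cite{LG}. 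Under Assumptions \ref{x0} and \ref{weakgcc}, the strict convexity of $\Omega$ and $\Omega_2$, and the uniformly escaping geometry condition on $\Omega_1^f$, the iterative construction of \cite{LG} certifies that every generalized bicharacteristic enters $\hbox{supp }b$ within time $T$, so that $\mu$ has empty support.

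The main obstacle is precisely this last microlocal propagation across the transmission interface. Because $k_2>k_1$, a ray incident on $\d\Omega_2$ from the slower medium $\Omega_1$ may undergo total internal reflection, and the associated interference rays can, a priori, keep energy away from $\hbox{supp }b$; this is the phenomenon that invalidates the naive Geometric Control Condition in the transmission setting. Overcoming it forces one to (i) give the transmission conditions \eqref{fronteira} a precise meaning at the level of traces, invoking the trace theory of \cite{4}, and (ii) import the geometric construction of \cite{LG}, so that the escaping-geometry hypothesis on $\Omega_1^f$ combined with the convexity and weak-GCC hypotheses leaves no generalized ray avoiding $\hbox{supp }b$ throughout $[0,T]$. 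A secondary technical point is the treatment of the infinite-dimensional memory variable $\eta$: one must check that its dynamics carries no nontrivial defect mass, which follows from the smallness of the memory energy established in the contradiction step.
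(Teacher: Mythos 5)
Your overall strategy coincides with the paper's: reduce exponential decay to the observability inequality \eqref{Obs ineq} via the energy identity \eqref{energy-identity} and a semigroup iteration, and prove \eqref{Obs ineq} by a contradiction/compactness argument in which microlocal defect measures kill the high-frequency part and unique continuation kills the weak limit. Two points differ, one organizational and one substantive. Organizationally, the paper first proves the \emph{weak} observability inequality \eqref{weakobs} (with the compact remainder $\|\cdot\|_{X^{-1}}^2$) and then removes the remainder by showing that the space $N_T$ of invisible solutions is trivial (Lemma \ref{leminv}); you merge the two steps. The paper's route buys something concrete at the unique-continuation stage: weak observability plus the compact embedding $X\hookrightarrow X^{-1}$ makes $N_T$ finite-dimensional and $\partial_t$-invariant, so invisible solutions reduce to eigenfunctions $e^{\lambda t}(U,V)$ and one only needs \emph{elliptic} unique continuation for $\lambda^2-k_1\,\mathrm{div}((1-k_0b)\nabla\cdot)$. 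Your direct appeal to a Tataru-type evolution UC theorem for the weak limit is a heavier tool and is not immediate here, because the weak limit satisfies the viscoelastic system rather than the free wave equation; one must first extract from the vanishing dissipation that $\nabla\bar u$ is stationary on $\mathrm{supp}(b)$ before any UC applies. This is doable but is precisely the delicacy the paper's spectral reduction avoids.

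The substantive issue is your final geometric claim that the construction of \cite{LG} ``certifies that every generalized bicharacteristic enters $\mathrm{supp}(b)$ within time $T$, so that $\mu$ has empty support.'' That statement is false under the stated hypotheses and is not what the paper proves: Assumption \ref{weakgcc} only guarantees that \emph{one} of the two rays attached to each hyperbolic point over $\Gamma_1$ meets $\mathrm{supp}(b)$, and rays living in $\Omega_2$, or escaping $\Omega_1^f$ through $\Gamma_2$ into $\Omega_2$, need never meet $\mathrm{supp}(b)$ at all. As you yourself note earlier, interference at the interface prevents a ray-by-ray GCC argument. The measures are instead emptied by the iterative \emph{outgoing-ray} propagation statements (Proposition \ref{propag} and Corollary \ref{equivsuppmes}): if one outgoing ray carries no measure then the mass concentrates on the other, and if both outgoing rays are clean then both incoming rays are; combined with the fact that $\Gamma_2$ satisfies GCC for $\Omega_2$, that every transversal ray of $\Omega_2$ is transmitted into $\Omega_1$, and with the uniformly escaping geometry of $\Omega_1^f$, this kills $\mu$ and $\nu$ without ever forcing rays into $\mathrm{supp}(b)$. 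Your sketch names the right difficulty but then asserts a conclusion that sidesteps it; as written, that step would not go through.
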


The proof of Theorem \ref{main} is done in the spirit of \cite{OA}. We begin by proving the weak observability inequality by contradiction. We obtain a sequence of solutions contradicting the weak observability to which we attach a microlocal defect measure. In the framework of the classical wave equation, the classical propagation results on the defect measure shows that the weak observability inequality holds if the support of the damping term satisfies the geometrical control condition (GCC). Compacity and unique continuation arguments are then used to prove that the weak observability inequality implies the observability inequality. In turn, the observability inequality is sufficient to deduce the exponential decay of the energy.

In the case of a transmission problem, the situation is more delicate. Under the hypothesis of Theorem \ref{main}, we shall prove that the rays of the optic geometry encountering the interface between the two medium satisfies the Snell's law
\begin{equation}\label{Snellintro}
\dfrac{\sin \theta_1}{\sqrt{k_1(1-k_0b(x))}}=\dfrac{\sin \theta_2}{\sqrt{k_2}},
\end{equation}
which simplifies to 
\begin{equation}\label{Snellintro}
\dfrac{\sin \theta_1}{\sqrt{k_1}}=\dfrac{\sin \theta_2}{\sqrt{k_2}},
\end{equation}
if the ray encounter the interface outside of $\textrm{supp}(b)$. A ray encountering the interface, say from $\Omega_1$ with an angle of incidence $\theta_1$, is then reflected at an angle $\theta_1$ and transmitted at an angle $\theta_2$ if \eqref{Snellintro} is not vacuous. By linearity of \eqref{AUP}, an interfering ray incoming from $\Omega_2$ may also exist. Indeed, if the ray possesses the same angle of incidence $\theta_2$, than the ray is reflected in $\Omega_2$ at an angle $\theta_2$ and transmitted at an angle $\theta_1$. Therefore, interference may occur between the two incoming rays such that the energy, or the support of the defect measure, concentrates along one of the outgoing rays. Therefore, observing only one of the two outgoing rays from the interface is not sufficient to gain information on the two incoming rays. However, we shall prove that if one of the two outgoing rays has no energy, then the energy has to have concentrated along the other outgoing ray (Proposition \ref{propag}), and that if the two outgoing rays possesses no energy, then the two incoming rays possess no energy as well (Corollary \ref{equivsuppmes}). 

By iterating the use of Proposition \ref{propag}, one can follow the propagation of the rays outgoing from the interface as long as one of the two outgoing rays is shown to be observed. We shall see that the concentration procedure can't last for so long if $\Omega_1 \setminus \textrm{supp}(b)$ satisfies the uniform escaping geometry condition introduced in \cite{LG}. Roughly speaking, this condition ensures that the rays propagating $\Omega_1 \setminus \textrm{supp}(b)$ uniformly escape from $\Omega_1 \setminus \textrm{supp}(b)$ to $\textrm{supp}(b)$ as well as the rays transmitted to $\Omega_2$. The uniform escaping geometry condition was stated in the context of boundary controllability and we shall extend the notion to the distributed control in Section 2.  

We conclude this part of the introduction to highlight a technique of proof used in this paper. We used Dafermos' change of variables to obtain an autonomous system \eqref{AUP} for which we could establish an observability inequality that allows us to conclude on the exponential decay. However, in many part of the proof, this change of variables will be deconstructed to work on the original, and more simple, system \eqref{MP1}. In some sense, Dafermos' change of variables gives insight into which observability inequality to prove. 

\subsection{The generalized bicharacteristics and the propagation theorem}

%

We begin by proving the weak observability, that is, there exists a constant $c(T)>0$ such that 
\begin{equation}\label{weakobs}
E(u,v,\eta)(0) \leq c(T)\ \left(\int_0^T\int_{0}^{\infty}(-g'(s)) \int_{\Omega_1}  b(x)|\nabla \eta(x,t,s)|^2 \, dxdsdt + \|(u_0,u_1,v_0,v_1, \eta_0)\|_{X^{-1}}^2 \right)
\end{equation}
where $X^{-1}$ is the usual dual space of $X$ for the wave equation \eqref{AUP}
\[
X^{-1}:= \L\times (\h )' \times L_g^2( \mathbb{R}^+;V'),
\]
with respect to the $L^2$ pivot space. 
 
We proceed by contradiction. Suppose that the weak observability does not hold. Therefore, there exists a subsequence of initial data $(u_0^n,u_1^n,v_0^n,v_1^n,\eta_0^n)\in X $ such that
\begin{align*}
\|(u_0^n,u_1^n,v_0^n,v_1^n, \eta_0^n)\|_{X}=1, \qquad \forall n\in \N, 
\end{align*}
and
\begin{align}
& \|(u_0^n,u_1^n,v_0^n,v_1^n, \eta_0^n)\|_{X^{-1}} \longrightarrow 0,  \nonumber  \\
\int_0^T\int_{0}^{\infty} (-&g'(s)) \int_{\Omega_1}  b(x)|\nabla \eta^n(x,t,s)|^2 \, dxdsdt \longrightarrow 0, \label{strgconveta}
\end{align}
since, from the hypothesis on $g$ and $b$, all the quantities in the right-hand side of \eqref{weakobs} are positive. In particular, we conclude that the sequence $(u^n,v^n)$, associated to the sequence of initial data, weakly converge to zero in $H^1((0,T)\times \Omega)$. Therefore, up to the extraction of a subsequence (using the same notations for the extracted sequence), there exist defect measures on $S^* \hat{\Sigma}_i$ (we postpone the definition of the cosphere bundle in the next subsection),
\[
(R_1 u^n,u^n) \longrightarrow \left< \mu,\kappa(R_1) \right>,  \quad (R_2v^n,v^n) \longrightarrow \left< \nu , \kappa(R_2) \right>,   
\]
for $R_i \in \psi^0(\R \times \overline{\Omega}_i)$, where $\kappa(R_i)$ is understood as a continuous function on $S^* \hat{\Sigma}_i$ (\cite{Peppino}).

Let us recall classical results on the propagation of the defect measure. 

\begin{theorem}\label{meschar}
Let $P$ be the classical wave operator over $\Omega$ and let $(u_n)$ be a bounded sequence of $L^2_{loc}(\Omega)$ weakly converging to zero and admitting a microlocal defect measure $\mu$. The following are equivalent 
\begin{itemize}
\item $Pu_n \longrightarrow 0$ strongly in $H^{-2}_{loc}(\Omega)$;
\item $supp(\mu) \subset Char(p)$. 
\end{itemize}
where $\sigma_2(P)=p$
\end{theorem}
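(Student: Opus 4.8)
The plan is to prove both implications directly from the defining property of the microlocal defect measure: for every properly supported $A\in\psi^0(\Omega)$ with principal symbol $\sigma_0(A)=a$, regarded as a function homogeneous of degree zero on $S^*\Omega$, one has $(Au_n,u_n)_{L^2}\to \langle\mu,a\rangle$, and, crucially, any operator of negative order is compact on $L^2_{loc}(\Omega)$ and therefore contributes nothing to this limit (since $u_n\rightharpoonup 0$). Writing $\tilde p:=p/|\xi|^2$ for the homogenization of the principal symbol, $Char(p)$ is exactly the zero set of $\tilde p$ on $S^*\Omega$. I would use the elliptic (as opposed to propagation) part of the symbolic calculus only; no Hamiltonian flow is needed here.

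For the implication $Pu_n\to 0$ in $H^{-2}_{loc}$ $\Rightarrow$ $supp(\mu)\subset Char(p)$, I would fix $\rho_0=(x_0,\xi_0)\in S^*\Omega\setminus Char(p)$ and show that $\mu$ vanishes near $\rho_0$. Choose $a\in C^\infty_c(S^*\Omega)$, $a\geq 0$, supported in a conic neighborhood of $\rho_0$ on which $p\neq 0$, and let $A=A^*\in\psi^0$ have principal symbol $a$. Homogenizing $b:=a/p$ to a symbol of order $-2$ and setting $B=\textrm{Op}(b)$, the calculus gives that $BP$ has principal symbol $b\,p=a$, hence $A-BP\in\psi^{-1}$. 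Consequently
\[
(Au_n,u_n)=(Pu_n,B^*u_n)+\big((A-BP)u_n,u_n\big).
\]
The last term tends to $0$ because $A-BP$ has negative order and is therefore compact. For the first term, $B^*\in\psi^{-2}$ so $B^*u_n$ is bounded in $H^2_{loc}$; after inserting cutoffs subordinate to the proper supports, the $H^{-2}$–$H^2$ duality yields $|(Pu_n,B^*u_n)|\lesssim \|Pu_n\|_{H^{-2}}\,\|B^*u_n\|_{H^2}\to 0$. Thus $\langle\mu,a\rangle=0$ for every such $a$, which gives $supp(\mu)\subset Char(p)$.

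For the converse, suppose $supp(\mu)\subset Char(p)$ and fix $\chi\in C^\infty_c(\Omega)$; it suffices to prove $\|\chi Pu_n\|_{H^{-2}}\to 0$. With $J=(1-\Delta)^{-1}\in\psi^{-2}$ one has $\|\chi Pu_n\|_{H^{-2}}^2\asymp\|J\chi Pu_n\|_{L^2}^2=(Tu_n,u_n)$, where $T:=(J\chi P)^*(J\chi P)\in\psi^0$. Since the principal symbol of $J\chi P$ is $\chi(x)\,p(x,\xi)/|\xi|^2=\chi\tilde p$, the operator $T$ has principal symbol $\chi^2|\tilde p|^2$. Therefore $(Tu_n,u_n)\to\langle\mu,\chi^2|\tilde p|^2\rangle$, and because $\tilde p$ vanishes on $Char(p)\supset supp(\mu)$, this limit is $0$. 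Hence $Pu_n\to 0$ strongly in $H^{-2}_{loc}(\Omega)$.

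The main technical point is bookkeeping of orders and proper supports so that all pseudodifferential compositions and the duality pairings are well defined on the open set $\Omega$, together with the parametrix construction of $B$ off $Char(p)$; once these are in place the argument is routine symbol calculus, the algebraic content being simply that ellipticity away from $Char(p)$ annihilates $\mu$, while the vanishing of $|\tilde p|^2$ on $supp(\mu)$ forces $Pu_n\to 0$. This is exactly the elliptic half of the theory of microlocal defect measures (see \cite{Peppino}), and I would present it in that form.
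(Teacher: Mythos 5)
Your proof is correct: the first implication is the standard elliptic parametrix argument (microlocalize off $\mathrm{Char}(p)$, write $A=BP+(A-BP)$ with $B\in\psi^{-2}$, and kill both terms by the $H^{-2}$--$H^{2}$ duality and compactness of negative-order operators), and the converse via $T=(J\chi P)^*(J\chi P)\in\psi^0$ with principal symbol $\chi^2|\tilde p|^2$ vanishing on $\mathrm{supp}(\mu)$ is also sound, modulo the proper-support bookkeeping you flag. The paper does not prove this statement at all --- it quotes it as a classical property of microlocal defect measures with a citation --- and your argument is precisely the standard one from that literature, so there is nothing to compare beyond noting that you supplied the proof the paper omits.
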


\begin{theorem}\label{mesinv}
Let $P$ be the classical wave operator over $\Omega$, satisfying $P=P^*$, and let $(u_n)$ be a bounded sequence of $L^2_{loc}(\Omega)$ weakly converging to zero and admitting a microlocal defect measure $\mu$. Assume $Pu_n \longrightarrow 0$ in $H^{-1}_{loc}(\Omega)$. Then, for every $a \in C^\infty(\Omega \times (\R^d \setminus {0}))$ homogeneous of degree $-1$ in the second variable and of compact support in the first variable. Then  
\[
\int_{\Omega\times \R \times S^{d}} \{p,a\} d\mu =0.
\]
\end{theorem}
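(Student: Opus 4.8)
The plan is to run the classical commutator argument of Tartar and G\'erard: I would realize $a$ as the principal symbol of a test operator and exploit that testing the defect measure against the commutator $[P,A]$ produces, on one hand, the Poisson bracket $\{p,a\}$ integrated against $\mu$, and on the other hand a quantity that vanishes thanks to the hypothesis $Pu_n \to 0$ in $H^{-1}_{loc}(\Omega)$.

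First I would fix a properly supported pseudodifferential operator $A$ of order $-1$ whose principal symbol equals $a$ (modifying $a$ by a harmless cutoff near $\xi=0$, which does not change its class on the cosphere bundle); this is legitimate since $a$ is compactly supported in the base variable. Because $P$ has order $2$ and $A$ has order $-1$, the symbolic calculus gives that $[P,A]$ is a pseudodifferential operator of order $0$ with principal symbol $\sigma_0([P,A])=\frac{1}{i}\{p,a\}$. As $p$ is homogeneous of degree $2$ and $a$ of degree $-1$ in the fiber variable, the bracket $\{p,a\}$ is homogeneous of degree $0$, hence defines a continuous function on the cosphere bundle $\Omega\times\R\times S^{d}$. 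The defining convergence of the microlocal defect measure, applied to the order-$0$ operator $[P,A]$, then yields
\[
\langle [P,A]u_n,u_n\rangle \longrightarrow \int_{\Omega\times\R\times S^{d}} \frac{1}{i}\{p,a\}\,d\mu.
\]

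Second, I would show that the same left-hand side tends to $0$. Expanding the commutator and using $P=P^*$,
\[
\langle [P,A]u_n,u_n\rangle = \langle Au_n, Pu_n\rangle - \langle Pu_n, A^*u_n\rangle.
\]
Since $A$ and $A^*$ both have order $-1$, the sequences $Au_n$ and $A^*u_n$ are bounded in $H^1_{loc}(\Omega)$; combined with $Pu_n \to 0$ in $H^{-1}_{loc}(\Omega)$, each term is controlled by a product of the form $\|Pu_n\|_{H^{-1}}\,\|{\cdot}\|_{H^1}$ through the $H^1$--$H^{-1}$ duality, and therefore vanishes in the limit. Comparing the two limits forces $\int \frac{1}{i}\{p,a\}\,d\mu=0$, i.e. $\int\{p,a\}\,d\mu=0$, which is the assertion.

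The only genuine point requiring care is the calibration of orders: the strengthened hypothesis $Pu_n \to 0$ in $H^{-1}$ (rather than the mere $H^{-2}$ convergence appearing in Theorem \ref{meschar}) is exactly what makes the commutator $[P,A]$ land at order $0$, so that it can be tested against $\mu$, while still allowing $Pu_n$ to be paired against the $H^1_{loc}$-bounded sequences $Au_n$ and $A^*u_n$. The remaining ingredients---properly supporting $A$, the small-frequency cutoff, and the uniformity of the $L^2_{loc}$ bounds---are routine and do not affect the principal-symbol computation on $\Omega\times\R\times S^{d}$.
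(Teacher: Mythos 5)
Your argument is correct: it is the standard Tartar--G\'erard commutator proof (quantize $a$ as an order $-1$ operator $A$, identify the limit of $\langle [P,A]u_n,u_n\rangle$ with $\int \frac{1}{i}\{p,a\}\,d\mu$ via the order-$0$ principal symbol, and kill the same quantity using $P=P^*$ and the $H^{1}_{loc}$--$H^{-1}_{loc}$ duality), and your remark about the calibration of orders being exactly what the $H^{-1}_{loc}$ hypothesis buys is the right one. Note, however, that the paper offers no proof of Theorem \ref{mesinv} at all --- it is recalled as a classical result with a reference --- so there is nothing to compare against beyond observing that your write-up reproduces the canonical argument from the literature correctly.
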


We shall prove that the framework we consider here fall in the scope of Theorem \ref{meschar} and Theorem \ref{mesinv}. 

In order to do so, we begin by proving that the contradiction argument implies that the memory term of \eqref{AUP} goes to zero strongly to zero in $H^{-1}_{loc}$. We begin by proving the following

\begin{lemma}\label{convetaen}
The strong convergence given by \eqref{strgconveta} implies 
\[
\int_0^\infty g(s) \int_{\Omega_1} b(x)|\nabla \eta^n(x,t,s)|^2\,dxds \longrightarrow 0, \qquad \textrm{for every } t\in [0,T].
\] 
\end{lemma}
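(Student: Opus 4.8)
Show that the time-integrated dissipation $\int_0^T \int_0^\infty (-g'(s)) \int_{\Omega_1} b|\nabla\eta^n|^2\,dx\,ds\,dt \to 0$ forces the "pointwise-in-$t$" energy of $\eta$, $\int_0^\infty g(s)\int_{\Omega_1} b|\nabla\eta^n|^2\,dx\,ds \to 0$ for each $t \in [0,T]$.

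**Key tool: Assumption 3.1.** We have $g(s) \le -c g'(s)$, i.e. $g(s) \le c\,(-g'(s))$. So pointwise in $(x,t,s)$,
$$g(s)\,b(x)|\nabla\eta^n|^2 \le c\,(-g'(s))\,b(x)|\nabla\eta^n|^2.$$
This immediately dominates the integrand we want to control by the integrand of the dissipation.

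So the plan is:

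**Step 1 — dominate by the dissipation.** Using Assumption 3.1, bound
$$\int_0^\infty g(s)\int_{\Omega_1} b|\nabla\eta^n(t)|^2 \le c\int_0^\infty (-g'(s))\int_{\Omega_1} b|\nabla\eta^n(t)|^2.$$
This reduces everything to showing the right-hand side $\to 0$ for each fixed $t$.

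**Step 2 — pass from the time-integral to pointwise-in-$t$.** This is the real work. We know only that the *time-integral* $\int_0^T(\cdots)\,dt \to 0$. To upgrade to pointwise convergence, use the structure of $\eta$: from $\eta_t + \eta_s = u_t$ (second line of AUP) one can express $\eta^n(t)$ in terms of $\eta^n$ at an earlier time and $u_t^n$. Concretely, integrate the energy identity / use the transport structure: the dissipation functional $D^n(t) := \int_0^\infty(-g'(s))\int b|\nabla\eta^n(t)|^2$ should be shown to be, up to controllable terms, *monotone* or to have controlled time-derivative, so that $L^1$-convergence in $t$ upgrades to pointwise. The cleanest route: show $t \mapsto \int_0^\infty g(s)\int b|\nabla\eta^n(t)|^2$ has a time-derivative bounded (using the energy identity (1.?)/the bound $\|U^n\|_X = 1$), hence the family is uniformly equicontinuous in $t$; combined with $L^1_t \to 0$ this gives pointwise (indeed uniform) convergence to $0$.

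**Expected main obstacle.** Justifying the upgrade from integral to pointwise convergence. The energy identity shows the full energy is monotone, but the $\eta$-component alone need not be. I expect the argument to use: differentiate $\int g(s)\int b|\nabla\eta^n|^2$ in $t$, use $\eta_t = u_t - \eta_s$, integrate by parts in $s$ (the boundary term at $s=0$ vanishes since $\eta(t,0)=0$, and the $g'$ term *reappears* as the dissipation), yielding a derivative of the form $2k_1^{-1}(\text{coupling with }u_t^n) + (\text{dissipation})$. Since $\|u_t^n\|$ is bounded (from $\|U^n\|_X=1$) and the dissipation's time-integral $\to 0$, one controls oscillations and concludes pointwise convergence. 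The delicate point is handling the boundary-in-$s$ behaviour and ensuring the coupling term with $u_t^n$ is genuinely controlled rather than merely bounded.
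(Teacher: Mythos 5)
Your Step 1 coincides exactly with the paper's argument: Assumption \ref{ass:2} gives $g(s)\le c\,(-g'(s))$ pointwise, so setting $F_n(t):=\int_0^\infty g(s)\int_{\Omega_1}b(x)|\nabla\eta^n(x,t,s)|^2\,dx\,ds$ and letting $D_n(t)$ be the corresponding dissipation density, one has $F_n(t)\le c\,D_n(t)$, and \eqref{strgconveta} then yields $\int_0^T F_n(t)\,dt\to 0$. You are also right that the crux is upgrading this $L^1(0,T)$ convergence of the nonnegative functions $F_n$ to convergence at every $t$. The paper dispatches this in a single clause, invoking only that $t\mapsto F_n(t)$ belongs to $C([0,T])$ by well-posedness; it does not attempt the derivative computation you outline. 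Since continuity of each individual $F_n$ does not by itself convert $L^1$ convergence into pointwise convergence, your instinct to seek equicontinuity in $n$ is sound and, on this point, more careful than the written proof.

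However, the mechanism you propose for Step 2 does not close. Differentiating $F_n$ in $t$ and substituting $\eta^n_t=u^n_t-\eta^n_s$ gives, after the integration by parts in $s$ you describe, $F_n'(t)=2\int_0^\infty g(s)\int_{\Omega_1}b(x)\,\nabla\eta^n\cdot\nabla u^n_t\,dx\,ds-D_n(t)$. Controlling the coupling term requires a bound on $\nabla u^n_t$ in $L^2(\mathrm{supp}(b))$, whereas the normalization $\|U^n\|_X=1$ only bounds $u^n_t$ in $L^2(\Omega_1)$ (the second component of $X$ is $\mathbb{L}^2$, not $\mathbb{H}^1_{\partial\Omega}$). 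For mild solutions $\nabla u^n_t$ need not exist as a function, and for strong solutions its size is governed by $\|U^n(0)\|_{D(\mathcal{A})}$, which is not uniformly bounded along the contradiction sequence; integrating by parts in $x$ instead merely shifts the problem to $\mathrm{div}(b\nabla\eta^n)$. So "$\|u^n_t\|$ bounded" is not enough, the claimed equicontinuity of $(F_n)$ does not follow from the available bounds, and the step on which the lemma actually turns remains unproved in your proposal (as, admittedly, it is only asserted in the paper). Any repair must exploit more structure than a crude derivative bound, e.g.\ the transport identity $\eta^n(t,s)=\eta^n(\tau,s-(t-\tau))+u^n(t)-u^n(\tau)$ along characteristics or a density-and-limit argument in the data.
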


\beginpf

This comes from the hypothesis $g(s) \leq -c g'(s) $, the strong convergence \eqref{strgconveta} and from the well-posedness result which implies that  
\[
\int_0^\infty g(s) \int_{\Omega_1} b(x)|\nabla \eta^n(x,t,s)|^2\,dxds
\]
is in $C([0,T])$. 

\endpf

From Lemma \ref{convetaen}, we obtain the strong convergence of $u^n$ over $\textrm{supp}(b)$.

\begin{lemma}\label{strongcvu}

From Lemma \ref{convetaen}, we have 
\[
\int_{supp(b)} | u_t^n |^2 + b(x) | \nabla u^n|^2  dx \longrightarrow 0 
\]
\end{lemma}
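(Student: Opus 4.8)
The plan is to read the claim off the microlocal defect measure $\mu$ attached to the sequence $u^n$ in the preceding paragraph, by showing that Lemma \ref{convetaen} forces $\mu$ to carry no mass over $\textrm{supp}(b)$. Up to genuinely lower order contributions, which vanish because $u^n \rightharpoonup 0$ in $H^1((0,T)\times\Omega)$ and hence $u^n \to 0$ strongly in $L^2$ by Rellich, the quantity $\int_{\textrm{supp}(b)}(|u_t^n|^2 + b(x)|\nabla u^n|^2)\,dx$ equals the integral of the energy symbol against $\mu$ over the fibers above $\textrm{supp}(b)$. Thus it suffices to prove $\mu = 0$ over $\textrm{supp}(b)$, and the velocity term is then governed by the same measure $\mu$, so that it vanishes together with the gradient term.

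The mechanism that turns Lemma \ref{convetaen} into the vanishing of $\mu$ is the time-difference structure $\nabla\eta^n(x,t,s) = \nabla u^n(x,t)-\nabla u^n(x,t-s)$. I would make this quantitative through a Fourier transform in the time variable, after a cutoff in $t$ away from the endpoints (using that the solution is defined on $(-\infty,T]$). Writing $\widehat{\,\cdot\,}$ for the time transform, Plancherel rewrites the conclusion of Lemma \ref{convetaen}, integrated in $t$, as
\[
\int_{\R}\int_{\Omega_1} b(x)\,|\widehat{\nabla u^n}(x,\tau)|^2\,G(\tau)\,dx\,d\tau \longrightarrow 0, \qquad G(\tau):=\int_0^\infty g(s)\,|1-e^{-i\tau s}|^2\,ds.
\]
A direct computation gives $G(\tau) = 2k_0 - 2\int_0^\infty g(s)\cos(\tau s)\,ds$, and the positivity of $g$ (Assumption \ref{ass:1}) yields $G(\tau)>0$ for every $\tau\neq 0$, with $G(\tau)\geq c_0>0$ once $|\tau|\geq\tau_*$. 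Hence the portion of $\nabla u^n$ whose time frequency is bounded away from zero converges strongly to zero in $L^2(b\,dx\,d\tau)$ over $\textrm{supp}(b)$; microlocally, this says exactly that $\mu$ charges no direction with $\tau\neq 0$ above $\textrm{supp}(b)$.

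It remains to rule out mass in the directions with $\tau=0$, and this is where the equation enters: on the characteristic set $Char(p)=\{\tau^2=k_1(1-k_0b(x))|\xi|^2\}$ the relation $\tau=0$ forces $\xi=0$, which is excluded on the cosphere bundle. Since $u^n$ solves $(\ref{AUP})_1$ and the memory term is controlled by Lemma \ref{convetaen}, the wave operator applied to $u^n$ tends to zero, so Theorem \ref{meschar} gives $\textrm{supp}(\mu)\subset Char(p)$. Combining the two facts, $\mu=0$ over $\textrm{supp}(b)$, whence $\int_{\textrm{supp}(b)}(|u_t^n|^2+b(x)|\nabla u^n|^2)\,dx\to 0$ as claimed.

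I expect the main obstacle to be the faithful translation between Lemma \ref{convetaen} and the vanishing of $\mu$: one has to justify the Fourier/Plancherel step with the time cutoff and its commutators, verify the nondegeneracy $G(\tau)>0$ quantitatively (where the positivity and monotonicity of $g$, and Assumption \ref{ass:2}, are used), and address the mild circularity in invoking Theorem \ref{meschar}, whose hypothesis rests precisely on the memory term controlled by Lemma \ref{convetaen} together with the strong $L^2$ convergence of $u^n$. Once $\mu$ is shown to vanish over $\textrm{supp}(b)$, the conclusion for both the gradient and the velocity is immediate.
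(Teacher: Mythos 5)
Your route is genuinely different from the paper's: the paper disposes of this lemma by citation, importing verbatim the multiplier/energy argument of \cite{AMO} (testing the equation against suitable localizations of $u^n$ and of $\int_0^\infty g(s)\eta^n(s)\,ds$ on $\mathrm{supp}(b)$), whereas you argue through the defect measure and a Fourier transform in time. The skeleton of your argument is sound, and part of it is cleaner than you fear: the circularity with Theorem~\ref{meschar} is avoidable, since $P_1u^n=-k_1k_0\,\mathrm{div}(b\nabla u^n)+k_1\int_0^\infty g(s)\mathrm{div}(b\nabla\eta^n(s))\,ds$ tends to $0$ in $H^{-2}_{loc}$ using only boundedness in $H^{-1}$, Rellich, and Lemma~\ref{convetaen} --- no appeal to the present lemma is needed. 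Moreover, for a conic neighborhood of a direction with $\tau_0\neq 0$ the region $\{|\tau|\le R\}$ is compact in frequency, so its contribution vanishes by weak convergence alone; the characteristic-set argument is genuinely needed only for the directions $\tau=0$, and it is also what makes the velocity term follow (it is not ``immediate'' from the gradient estimate: one must first conclude that the full measure $\mu$, not just its gradient component, vanishes over $\mathrm{supp}(b)$).

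The genuine gap is the Plancherel step. The identity $\int_\R G(\tau)\,\|b^{1/2}\widehat{\nabla u^n}(\tau)\|^2\,d\tau=\int_0^\infty g(s)\|b^{1/2}(\nabla u^n(\cdot)-\nabla u^n(\cdot-s))\|^2_{L^2(\R_t\times\Omega_1)}\,ds$ requires control on all of $\R_t$, while Lemma~\ref{convetaen} controls the right-hand side only for $t\in[0,T]$. Inserting a cutoff $\chi\in C_c^\infty(0,T)$ produces the commutator error $\int_0^\infty g(s)\int_\R|\chi(t)-\chi(t-s)|^2\,\|b^{1/2}\nabla u^n(t-s)\|^2\,dt\,ds$; because $s$ ranges over all of $(0,\infty)$ and $\|b^{1/2}\nabla u^n\|$ near the endpoints of $[0,T]$ and in the past history is precisely the quantity under estimation, this error is $O(1)$, not $o(1)$, and cannot be absorbed. (That some such input beyond Lemma~\ref{convetaen} is indispensable is shown by the counterexample of a bounded, weakly null, time-independent sequence, for which the memory term vanishes identically.) A workable repair is to localize in $s$ as well: restricting to $s\in(0,\delta)$ makes the cutoff error $O(\delta)$ while $\int_0^\delta g(s)|1-e^{-i\tau s}|^2\,ds$ stays bounded below for $|\tau|\gtrsim 1/\delta$; the remaining frequencies $|\tau|\lesssim 1/\delta$ inside the cone $\tau\neq 0$ are handled by the compactness remark above, and $\delta\to 0$ at the end kills $\mu$ on $\{\tau\neq 0\}$ over $\mathrm{supp}(b)$. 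Without this (or an equivalent) refinement the proof does not close. Two smaller caveats: the interior defect-measure conclusion must be supplemented near $\partial\Omega$ if $\mathrm{supp}(b)$ meets it (as in the paper's figures), and it yields convergence in $L^1_{loc}(0,T)$ rather than for every $t\in[0,T]$ without an additional equicontinuity argument.
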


\beginpf

The proof comes directly from \cite{AMO} by replacing $\Delta$ by $div( b(x) \nabla)$ (see also \cite{Cavalcanti2} for a very similar proof). Since the proof is verbatim the same, it will be omitted.

\endpf

From Lemma \ref{strongcvu}, we conclude that Theorem \ref{meschar} and Theorem \ref{mesinv} applies for the first equation of \eqref{MP1}. Indeed, define 
 \begin{equation}\label{EqP1}
P_1 u := u_{tt} - k_1 \Delta u = -k_1\int_{-\infty}^t g(t-s) \textrm{ div}(b(x) \nabla u)(s)\,ds. 
\end{equation}
Let $\psi \in C_c^\infty((0,T)\times \Omega_1)$. Then , we have 
\begin{align*}
-\int_{\Omega_1} k_1\int_{-\infty}^t g(t-s) \textrm{ div}(b(x) \nabla u^n)(s)\,ds \psi u^n dx =&  \int_{\Omega_1} k_1\int_{-\infty}^t g(t-s) b(x) \nabla u^n(s)\,ds \nabla (\psi u^n) dx   \longrightarrow 0
\end{align*}
which gives $P_1 u^n \rightarrow 0$ in $H^{-1}_{loc}((0,T)\times \Omega_1)$ (outside the support of $b$, the right-hand side of \eqref{EqP1} is identically zero). Moreover, one readily obtain from Lemma \ref{convetaen} that the sequence of initial data $\eta^n_0$ strongly converge to $0$ in $L^2(\R^+;V)$. It remains to treat the sequence of initial data $(u_0^n,u_1^n,v_0^n,v_1^n)$, and, in particular, the propagation of the defect measure across the interface. 

Notice that, outside of the support of $b$, $P_1 u$ writes 
\[
P_1 u =  u_{tt} - k_1 \Delta u, 
\]
and therefore, away from the boundary and from the support of $b$, the support of the defect measure $\mu$ propagates along a union of bicharacteristics (given by the wave operator $\partial_t^2 - k_1 \Delta$). Moreover, we proved that the strong convergence of the viscoelastic term implies the strong convergence of $u^n$ over the support of $\textrm{supp}(b)$ in $H^1((0,T)\times \textrm{supp}(b))$, which, in turn, implies that the support of the defect measure $\mu$ is located outside the support of $b$.

It remains to describe how the bicharacteristics of \eqref{AUP} propagates in $\Omega$ and how the support of the defect measure propagates at the interface.  

\subsection{Propagation of the generalized bicharacteristics}

Let $M_i=\R_t \times \Omega_i, i=1,2$. We consider $T^*(M_i)=\{ (t,s,x,\tau,\sigma,\xi_i) \in \R^8 \, | \, (t,x)\in \R_t \times \Omega_i \}$. The principal symbol of $P_2=\d_t^2-k_2 \Delta$ is given by 
\[
p_2(t,x;\tau,\xi_2)=\tau^2 - k_2 |\xi_2|^2.
\]
The rays of the bicharacteristics in $\Omega_2$ are solution to 
\begin{equation}\label{charpro}
\begin{cases}
\dot{t}(s)=-2 \tau,  \quad \quad \dot{\tau}(s)=0, &  s \in \R, \\
\dot{x}(s)=2 k_2 \xi_2,  \, \quad \dot{\xi}_2(s)=0, &   s \in \R,
\end{cases}
\end{equation}
and we define the bicharacteristics rays as the projection over on the $(x,t)$ coordinates. It comes from \eqref{charpro} that the rays propagates in straight line and at constant speed away from the boundary. The characteristics set is defined as $\textrm{Char}(p_2)=\{ \rho_2=(x,t,\xi_2,\tau)\in T^*(M_2) \, | \, p_2(\rho_2)=0 \}$.  

On the other hand, according to Theorem \ref{meschar}, Theorem \ref{mesinv} and Lemma \ref{strongcvu}, the propagation is given by 
\[
P_1 u=u_{tt}-k_1 div( (1-k_0 b(x)) \nabla u).\]
The principal symbol of $P_1$ is
\[
\sigma_2(P_1)=p_1=\tau^2-k_1 (1-k_0b(x))|\xi_1|^2.
\]
Hence, the propagation of the bicharacteristics in $\Omega_1$ is given by
\[
\begin{cases}
\dot{t}(s)=-2 \tau, \qquad \qquad \qquad \, \, \quad \quad \dot{\tau}(s)=0, & s \in \R, \\
\dot{x}(s)=2k_1 (1-k_0b(x))\xi_1(s), \quad  \dot{\xi}_1(s)=k_1 k_0 \nabla b(x)\xi_1(s), &  s \in \R.
\end{cases}
\]
It is important to notice that, outside the support of $b$, the rays of the optic geometry propagate in straight line as in $\Omega_2$, that is 
\[
\begin{cases}
\dot{t}(s)=-2 \tau,  \quad \quad \dot{\tau}(s)=0, & s \in \R, \\
\dot{x}(s)=2k_1 \xi_1,  \, \quad \dot{\xi}_1(s)=0, &  s \in \R.
\end{cases}
\] 
Let us finally define the characteristic set $\textrm{Char}(p_1)=\{ \rho_1=(x,t,\xi_1,\tau)\in T^*(M_1) \, | \, p_1(\rho_1)=0 \}$.  \newline

\textbf{Propagation near $\d \Omega$}\newline

In this section, we follow closely the presentation of \cite{Peppino}. At the boundary $\d \Omega$, a stardard reflection occurs. Let the local geodesic coordinates $x=(x',x_n)$ such that $\Omega=\{x_n >0\}, \d \Omega=\{ x =0 \}$ and where $x'$ is the tangential component near the origin. The Laplacian takes locally the form
\[
\Delta=\d_{x_n}^2 + R(x_n,x',D_{x'}), 
\]
where $R(x_n,x',D_{x'}) $ is a second order tangential elliptic operator of real principal symbol $r(x_n,x',\xi'_1)$. Let $T^*(\d \Omega \times \R_t )=\{ \rho \in T^*(M_1) \, | \, x_n=0 \}$. We recall the definition of the compressed cotangent bundle : for $x$ near the boundary, we define ${}^bT(\overline{M}_1)=\cup_{x\in \overline{\Omega}_1} {}^b T_x (\overline{M}_1)$. We have the map 
\begin{align*}
j:  T^*M_1 &  \longrightarrow {}^b T^* M_1 = \cup_{x\in \overline{\Omega}_1} \left( {}^b T_x \overline{M}_1 \right)^* \\ 
 (x_n,x',t,\xi_1^n,\xi_1',\tau) & \longmapsto  (x_n,x',t,x_n \xi_1^n,\xi_1',\tau).
\end{align*}
Near the boundary, we have $\Sigma_1 := j(\textrm{Char}(p_1)) \subset {}^bT^* M_1 $ and $\Sigma_1^0 := \restriction{\Sigma}{x_n=0} \subset {}^b T^*\restriction{M_1}{x_n=0} \simeq T^*(\d \Omega \times \R_t)$. We define the glancing set
\[
\G:=\left\{(t,x,\tau,\xi_1)\in \Sigma_1^0   \, | \, r(x_n,x',\xi_1)=|\tau|/\sqrt{k_1} \right\},
\]
and the hyperbolic set $H=\Sigma_1^0 \setminus G$ defined by 
\[
\H:=\left\{(t,x,\tau,\xi_1)\in \Sigma_1^0  \, | \, r(x_n,x',\xi_1)>|\tau|/\sqrt{k_1} \right\}.
\]
Finally, define the elliptic set 
\[
\E:=\left\{(t,x,\tau,\xi_1)\in \Sigma_1^0  \, | \, r(x_n,x',\xi_1)<|\tau|/\sqrt{k_1} \right\},
\]
define $\hat{\Sigma}_1=\Sigma_1 \cup \E$ and $S^* \hat{\Sigma}_1=\hat{\Sigma}_1 / (0,\infty)$. It is well known that a ray $\gamma^-(s)$ encountering the boundary $\d \Omega$ transversally at a point $\rho^- \in \H$ is reflected with the same angle as the angle of incidence and that the reflected ray $\gamma^+(s)$ corresponds to $\rho^+ \in \H$ such that $j(\rho^-)=j(\rho^+)$ since $x_n=0$. The fact that $\rho^+$ exists comes from the definition of the set $\H$ which ensures that $p_1(\rho)=0$ has two real roots. We also know that the propagation of the bicharacteristics associated to the set $\G$ depends on the nature of the point $\G$. The rays may glide, hit non-transversally the boundary or encounter tangentially the boundary and glide on the boundary (see \cite{LameBJ}). We recall here the crucial hypothesis that there is no contact of infinite order between the geodesics and the boundary so that the bicharacteristic flow is uniquely defined. \newline

\textbf{Propagation near $\d \Omega_2$} \newline

Near $\d \Omega_2$, the propagation of the generalized bicharacteristics was described in \cite{LG} (see also \cite{LameBG,LameBJ}). We use the local geodesic coordinates near $\d \Omega_2$ such that, locally, $\Omega_2=\{ x_n >0 \}$, $\d \Omega_2 = \{ x_n = 0 \}$ and $\Omega_1=\{ x_n<0 \}$. One deduce the Snell's law from the trace equality at the interface $u=v$. Indeed, taking the tangential gradient $\nabla'$ yields $\nabla' u = \nabla' v$, which translates to $\xi_1'=\xi_2'$ or, using the characteristic set, to
\begin{equation}\label{snell}
\dfrac{\sin \theta_1}{\sqrt{k_1(1-k_0b(x))}}=\dfrac{\sin \theta_2}{\sqrt{k_2}}.
\end{equation}
This condition is simplifies to 
\begin{equation*}
\dfrac{\sin \theta_1}{\sqrt{k_1}}=\dfrac{\sin \theta_2}{\sqrt{k_2}},
\end{equation*}
where $b(x)=0$ for $x\in \d \Omega_2$. According to the hypothesis $k_1<k_2$ and $k_1(1-k_0b(x))<k_2, \forall x\in \overline{\Omega}_2$, we see that \eqref{snell} is never vacuous for any $\theta_2 \in [0,\pi/2]$ and that there exists a critical angle $\theta_1^c$, that depends on $x$ for $b(x)\neq 0$ for $x\in \d \Omega_2$, such that \eqref{snell} is satisfied for $\theta_2=\pi /2$. This angle geometrically corresponds to the transmission of a gliding ray (recall that we assume $\Omega_2$ strictly convex). This also corresponds to the decomposition of the phase space at the interface $T^*((0,T) \times \d \Omega_2)$. Indeed, this set is decomposed as 
\begin{align*}
\H^1 \times \H^2:=&\left\{ (\rho_1,\rho_2) \in T^*((0,T) \times \d \Omega_2)\, | \, r(x_n,x',\xi_i) > |\tau|/\sqrt{k_i} \right\}, \\
\H^1 \times \G^2:=&\left\{ (\rho_1,\rho_2) \in T^*((0,T) \times \d \Omega_2)\, | \, r(x_n,x',\xi_1) > |\tau|/\sqrt{k_1},   r(x_n,x',\xi_2) = |\tau|/\sqrt{k_2} \right\}, \\
\H^1 \times \E^2:=&\left\{ (\rho_1,\rho_2) \in T^*((0,T) \times \d \Omega_2)\, | \, r(x_n,x',\xi_1) > |\tau|/\sqrt{k_1},   r(x_n,x',\xi_2) < |\tau|/\sqrt{k_2} \right\}, \\
\G^1 \times \E^2:=&\left\{ (\rho_1,\rho_2) \in T^*((0,T) \times \d \Omega_2)\, | \, r(x_n,x',\xi_1) = |\tau|/\sqrt{k_1},   r(x_n,x',\xi_2) < |\tau|/\sqrt{k_2} \right\}, \\
\E^1 \times \E^2:=&\left\{ (\rho_1,\rho_2) \in T^*((0,T) \times \d \Omega_2)\, | \, r(x_n,x',\xi_i) < |\tau|/\sqrt{k_i} \right\},
\end{align*}
\begin{figure}[!ht]
\begin{center}
	\includegraphics[height=3.5cm]{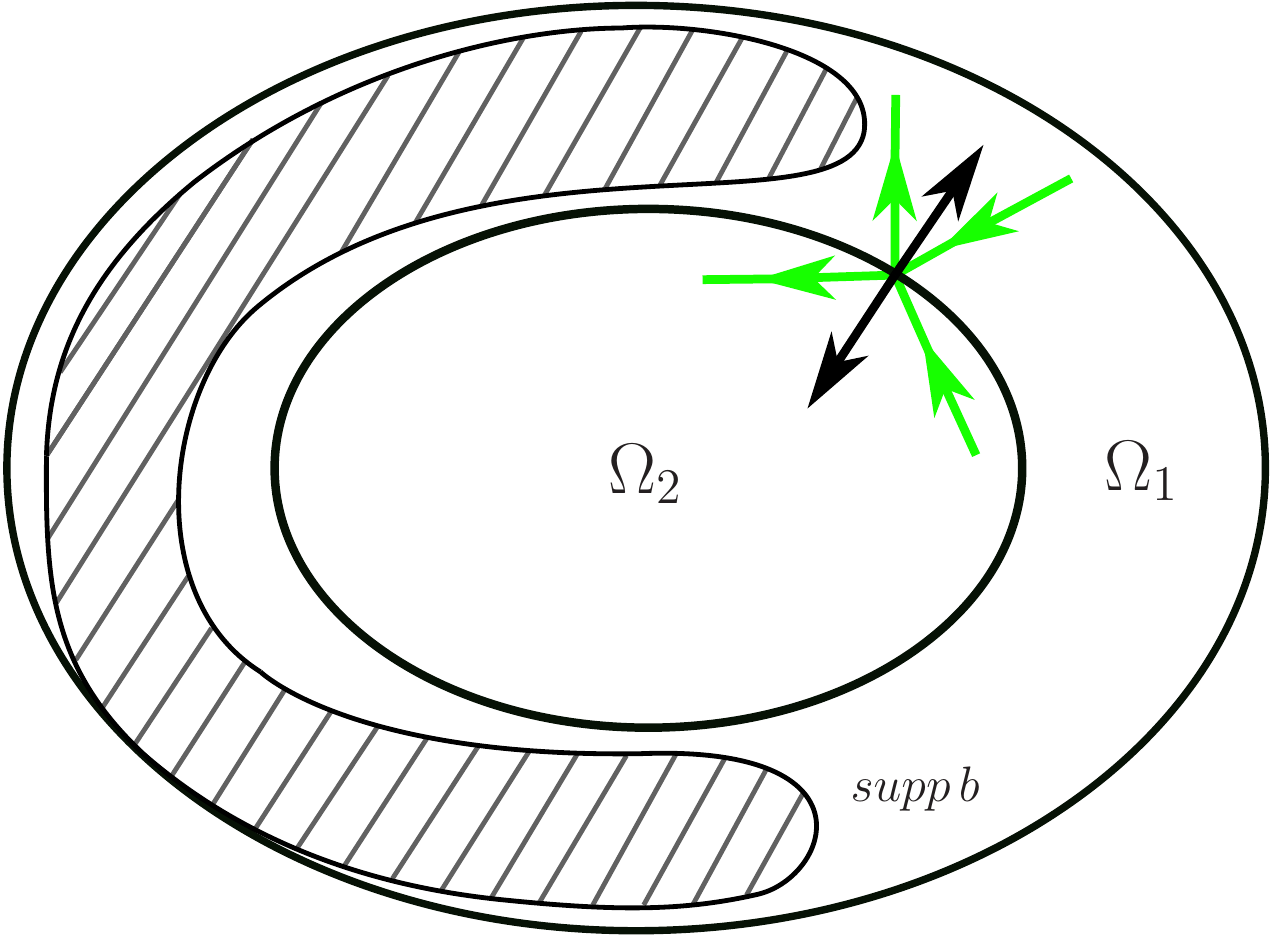}
	\includegraphics[height=3.5cm]{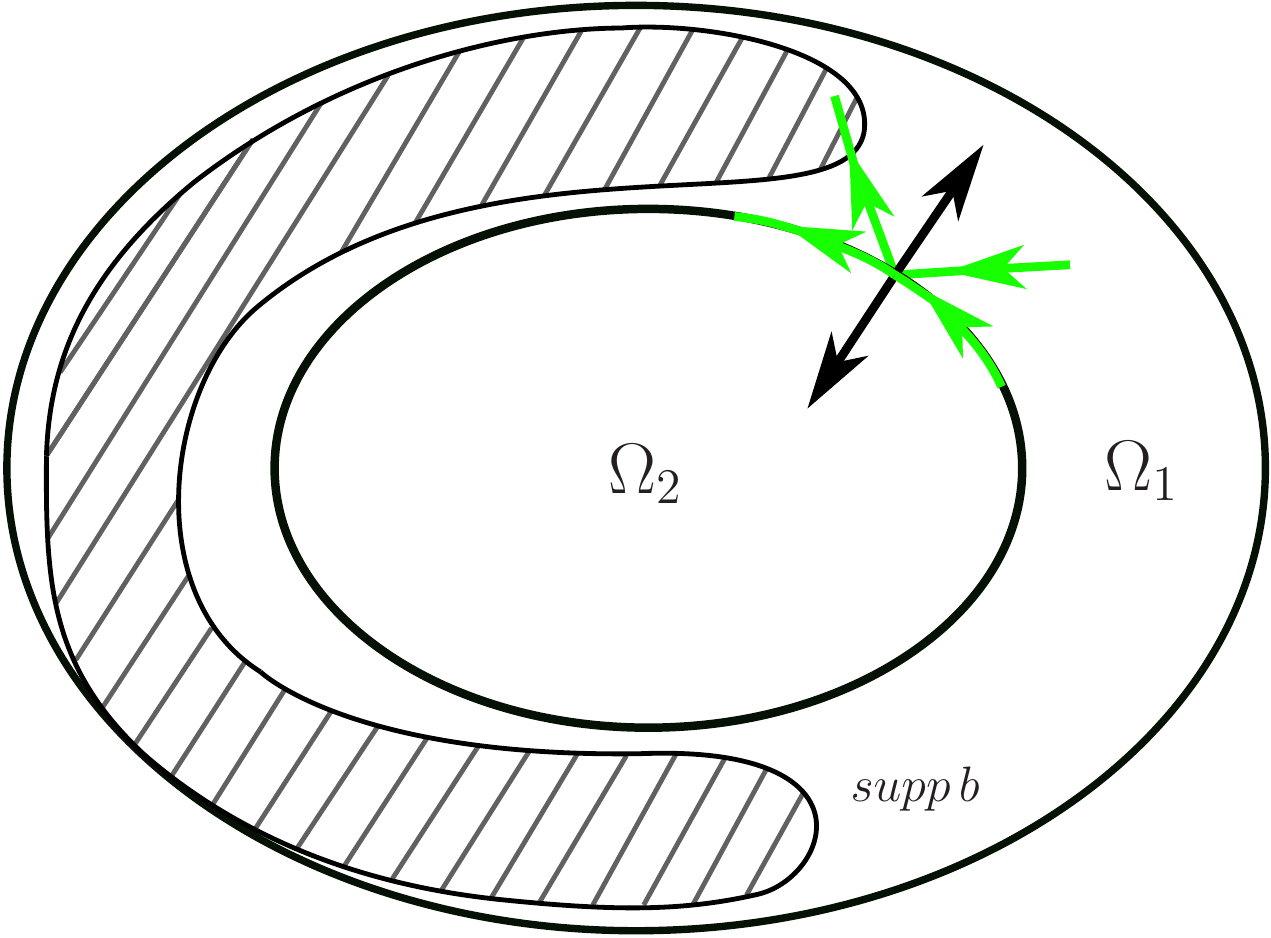} \\
	\includegraphics[height=3.5cm]{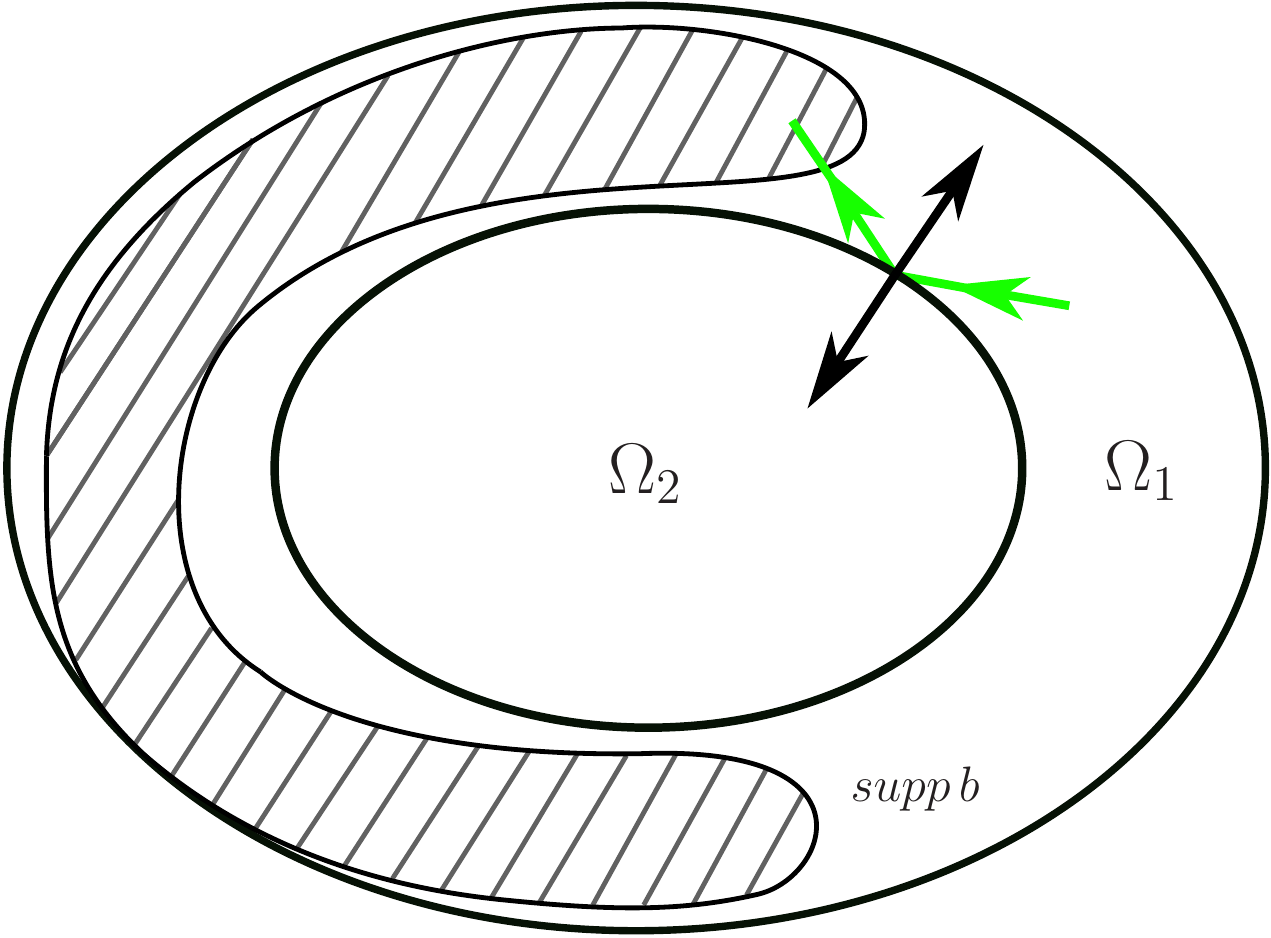}
	\includegraphics[height=3.5cm]{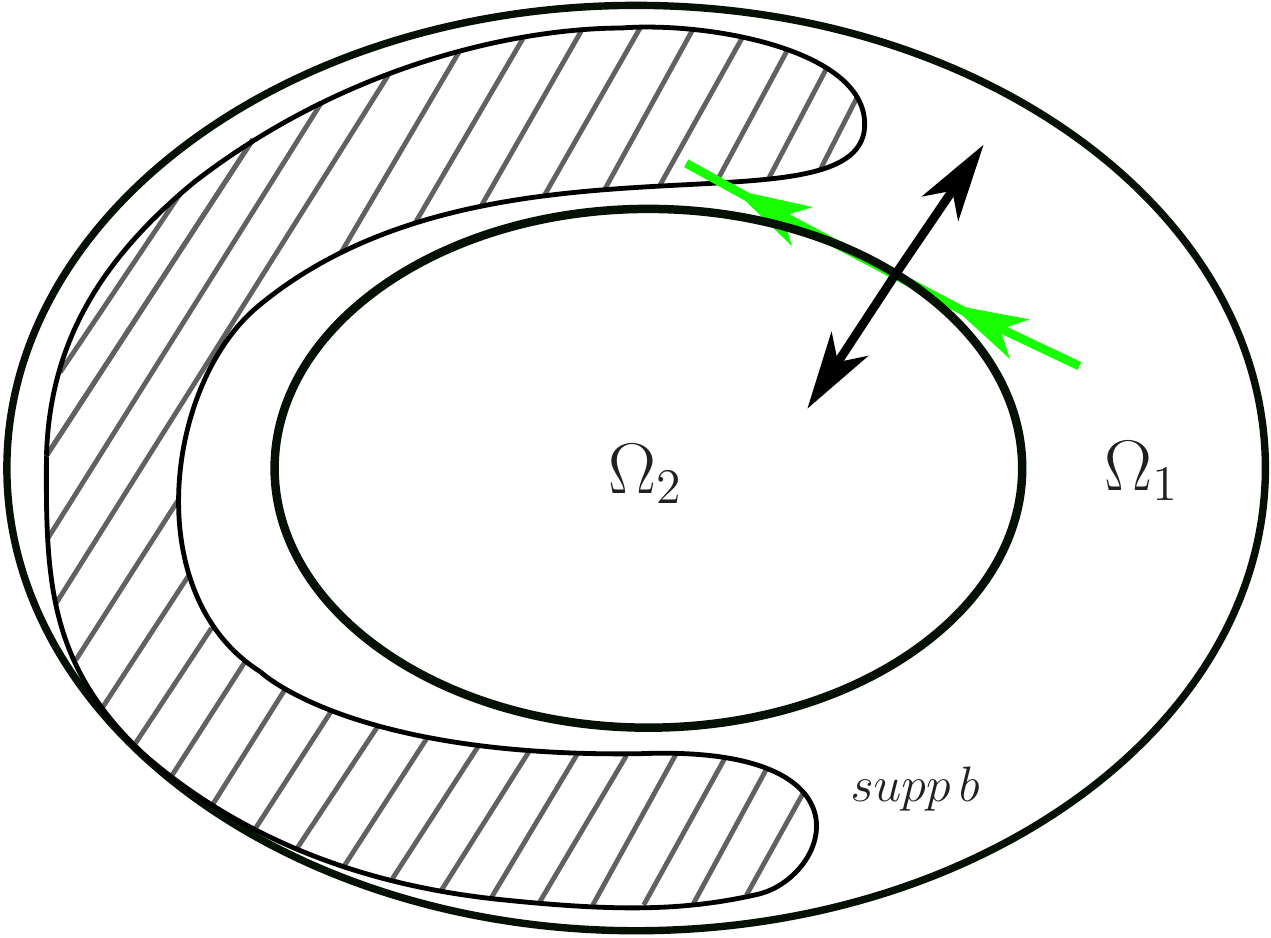}
\end{center}
\caption{\footnotesize{Representation of rays at the interface for $\H^1 \times \H^2$ (up left), $\H^1 \times \G^2$ (up right), $\H^1 \times \E^2$ (down left) and $\G^1 \times \E^2$ (down right) } }\label{Gam1}
\end{figure}
The propagation near the interface is similar to the one near $\d \Omega$. Suppose for instance that a ray $\gamma_1^-(s)$ encounters the interface at a point $\rho_1^-$ and at an angle $\theta_1$. From the classical properties, if $\theta_1 < \pi/2$, then $\rho_1^- \in \H^1$ and there exists $\rho_1^+ \in \H^1$ such that $j(\rho_1^-)=j(\rho_1^+)$. Therefore $\gamma_1^-(s)$ is reflected in $\Omega_1$ in a ray $\gamma_1^+(s)$. Moreover, if the angle $\theta_1<\theta_1^c$, then there exists $(\rho_2^-,\rho_2^+)\in (\H^2)^2$. The rays associated to these points correspond to the transmission and the possible interference of the ray. If $\theta_1=\theta_1^c$, then there is a reflection and a ray transmitted tangentially. Finally, for points $(\rho_1,\rho_2)\in (\H^1 \cup \G^1) \times \E^2$, the ray in $\Omega_1$ stays in $\Omega_1$ and is not transmitted. We highlight here, as it is crucial in the geometrical argument, that every ray of $\Omega_2$ intersecting (transversally) the interface are transmitted to $\Omega_1$.

\subsection{Properties of the defect measures}

We recollect what we have proved so far for the defect measure $\mu$ and $\nu$ : their support is included in $\textrm{Char}(p_i)$ and is invariant along the bicharacteristic flow inside $\Omega_1$ and $\Omega_2$ (Theorem \ref{meschar} and Theorem \ref{mesinv}). Moreover, Lemma \ref{strongcvu} implies $\mu = 0 $ over $T^*(\textrm{supp}(b) \times (0,T))$. The reflection on the outside boundary $\d \Omega$ is understood since \cite{BLR}.
\begin{lemma}
For $\rho \in \H \cup \G$, we have 
\[
\left( \gamma^- \cap \textrm{supp}(\mu)\right) = \emptyset \Leftrightarrow \left( \gamma^+ \cap \textrm{supp}(\mu)\right) = \emptyset 
\]
\end{lemma}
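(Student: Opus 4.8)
The plan is to recognize this as the classical hyperbolic/glancing reflection law for the Dirichlet wave equation, in the form established in \cite{BLR}, and to reduce it to the interior invariance already recorded in Theorem \ref{mesinv}. First I would note that the statement is local near $\d\Omega$, and that by Lemma \ref{strongcvu} the measure $\mu$ vanishes on $T^*(\textrm{supp}(b)\times(0,T))$; hence on the part of a neighborhood of $\d\Omega$ where $\mu$ may be nonzero the relevant operator is the pure wave operator $P_1=\d_t^2-k_1\Delta$, with the Dirichlet condition $u^n=0$ on $\d\Omega$. Working in the geodesic normal coordinates $x=(x',x_n)$ with $\Omega=\{x_n>0\}$ and $\Delta=\d_{x_n}^2+R(x_n,x',D_{x'})$, the equation $p_1=\tau^2-k_1(\xi_n^2+r(x_n,x',\xi'))=0$ has, at a point of $\H$, two distinct real roots $\xi_n=\pm\zeta(x,\tau,\xi')$ carrying respectively the incoming branch $\gamma^-$ and the reflected branch $\gamma^+$, while at a point of $\G$ the two roots coalesce.

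Second, for $\rho\in\H$ I would establish the equivalence by the reflection argument of \cite{BLR}. Extending $u^n$ across the boundary by odd reflection, $\tilde u^n(x',x_n):=-u^n(x',-x_n)$ for $x_n<0$, and symmetrizing the tangential operator $R$ in $x_n$, the Dirichlet condition guarantees that $\tilde u^n$ stays $H^1$ across $\{x_n=0\}$ (no jump of $\tilde u^n$ nor of $\d_{x_n}\tilde u^n$, hence no singular contribution), converges weakly to zero, and solves the symmetrized wave equation up to a source tending to zero in $H^{-1}_{loc}$. This places us within the scope of the interior invariance of Theorem \ref{mesinv}, the coalescence of coefficients at $\{x_n=0\}$ being handled as in \cite{BLR}; transporting a bicharacteristic through $\{x_n=0\}$ in the extended picture maps the incoming branch onto the reflected branch, so that $\gamma^-\cap\textrm{supp}(\mu)=\emptyset$ if and only if $\gamma^+\cap\textrm{supp}(\mu)=\emptyset$. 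Equivalently, and more rigorously at the level of symbols, one decomposes microlocally $u^n=u^n_++u^n_-$ into outgoing/incoming components via the factorization of $P_1$ into first-order factors $D_{x_n}\mp\zeta$, and uses the Dirichlet trace relation to identify the boundary traces of the two components; since the reflection is unitary this forces the branch measures on $\gamma^-$ and $\gamma^+$ to coincide.

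Third, for a glancing point $\rho\in\G$ the reflection decomposition degenerates, and I would instead invoke the Melrose--Sjöstrand boundary propagation theorem. The standing hypotheses — that $\d\Omega$ is $C^k$, $k\ge 3$, with no contact of order $k-1$ with its tangents, and in particular no contact of infinite order between the geodesics and $\d\Omega$ — guarantee that the generalized bicharacteristic flow through $\rho$ is uniquely defined and continuous (see \cite{LameBJ}), so that $\gamma^-$ and $\gamma^+$ are the two arcs of a single generalized bicharacteristic issued from $\rho$, whether it be diffractive or gliding. The content of \cite{BLR} is precisely that $\textrm{supp}(\mu)$ is invariant under this flow; since $\gamma^-$ and $\gamma^+$ then lie on the same generalized bicharacteristic, either both meet $\textrm{supp}(\mu)$ or neither does.

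Finally, the main obstacle is the glancing case: the clean odd-reflection reduction used at hyperbolic points breaks down when the two normal roots coalesce, and one must appeal to the full boundary propagation machinery of Melrose--Sjöstrand, which is exactly where the finite-order-contact assumption on $\d\Omega$ is indispensable. The hyperbolic case, by contrast, is essentially bookkeeping once the interior invariance of Theorem \ref{meschar} and Theorem \ref{mesinv} is combined with the odd extension.
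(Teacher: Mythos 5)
Your argument is correct and is essentially the paper's own: the paper offers no proof of this lemma beyond the remark that reflection at $\d\Omega$ ``is understood since \cite{BLR}'', and your reduction to the pure wave operator away from $\textrm{supp}(b)$ (where Lemma \ref{strongcvu} already kills $\mu$), the hyperbolic-point reflection via odd extension or the factorization $D_{x_n}\mp\zeta$ with the Dirichlet trace relation, and the appeal to Melrose--Sj\"ostrand at glancing points under the finite-order-contact hypothesis is precisely the classical machinery being invoked there. No discrepancy to report.
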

It remains to understand the propagation of the defect measure across the interface. If $\textrm{supp}(b) \, \cap \, \d \Omega = \emptyset$, then the following readily apply (\cite{LG} by adapting the proof of \cite{LameBG}). 

\begin{prop}\label{propag}
With the above notations, we have 
\begin{enumerate}
\item If $(\rho_1,\rho_2) \in (\H^1 \cup \G^{1,-} \cup \E^1) \times \E^2$, then $\nu=0$ near $\rho_2$. Therefore,
\begin{enumerate}
\item if $\rho_1 \in \E^1$, then $\mu=0$ near $\rho_1$,
\item otherwise, $\rho_1 \in \H^1 \cup \G^{1,-}$ and the support of $\mu$ propagates from $\gamma_1^-$ to $\gamma_1^+$. 
\end{enumerate}
\item otherwise $\rho_2\in \H^2 \cup \G^{2,+}$ and $\rho_1 \in \H^1$. In this case, if $\gamma_1^- \cap \textrm{supp}(\mu) = \emptyset$ (resp. $\gamma_2^- \cap \textrm{supp}(\nu) = \emptyset$), then the support of $\nu$ (resp. $\mu$) propagates from $\gamma_2^-$ (resp. $\gamma_1^-$) to $\gamma_i^+, i=1,2$.
\end{enumerate}
\end{prop}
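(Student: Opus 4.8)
The plan is to follow \cite{LG}, which adapts the interface analysis of \cite{LameBG}, working in the local geodesic coordinates $(t,x',x_n)$ near a point $\rho^0 \in T^*((0,T)\times \d\Omega_2)$ fixed above. Since $\textrm{supp}(b)$ stays at a positive distance from $\d\Omega_2$ by Assumption \ref{ass:1}, near the interface both operators reduce to the constant-coefficient wave operators $P_i=\d_t^2-k_i\Delta$, so only the jump of $k_i$ across $\{x_n=0\}$ together with the transmission conditions $u=v$ and $k_1\d_n u=k_2\d_n v$ enter. The first step is to extract the two constraints these impose on $\mu$ and $\nu$: testing the Dirichlet equality $u^n=v^n$ against tangential pseudodifferential operators and passing to the limit forces the tangential trace measures of $\mu$ and $\nu$ to coincide on $T^*((0,T)\times\d\Omega_2)$, while the flux equality yields continuity of the normal energy flux across the interface, exactly as in the single-boundary situation treated since \cite{BLR}.

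For the elliptic alternative (item (1)), I would use microlocal elliptic regularity. If $\rho_2\in\E^2$ then $P_2$ is tangentially elliptic near $\rho_2$ and, since $P_2 v^n\to 0$ in $H^{-1}_{loc}$, the sequence $v^n$ is microlocally relatively compact near $\rho_2$ up to the interface; hence $\nu=0$ near $\rho_2$. If moreover $\rho_1\in\E^1$, the same argument applied to $P_1$ gives $\mu=0$ near $\rho_1$, which is subcase (1a). In subcase (1b), $\rho_1\in\H^1\cup\G^{1,-}$ and, because $\nu=0$ on the $\Omega_2$ side, the interface acts microlocally as a totally reflecting boundary for $\Omega_1$; the propagation of $\mu$ from $\gamma_1^-$ to $\gamma_1^+$ then follows from the reflection law already recorded for $\d\Omega$ together with the flux balance, which shows no mass leaks into $\Omega_2$. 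The glancing contribution $\G^{1,-}$ is absorbed by the glancing propagation results of \cite{LameBJ}, the no-infinite-order-contact hypothesis guaranteeing a well-defined flow.

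The core is the transmission regime (item (2)), where $\rho_1\in\H^1$ and $\rho_2\in\H^2\cup\G^{2,+}$, so that the four rays $\gamma_1^\pm,\gamma_2^\pm$ meet at $\rho^0$. I would decompose $u^n$ and $v^n$ near the interface into their incoming and outgoing half-wave components in the normal variable, whose principal symbols $a_1^\pm,a_2^\pm$ carry the masses of $\mu$ and $\nu$ along $\gamma_1^\pm,\gamma_2^\pm$. Inserting this decomposition into the transmission conditions produces a linear system coupling the four symbols: continuity gives $a_1^-+a_1^+=a_2^-+a_2^+$ and the flux condition gives a second relation of Fresnel type, schematically $k_1\xi_{n,1}(a_1^+-a_1^-)=k_2\xi_{n,2}(a_2^+-a_2^-)$. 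If one incoming ray carries no mass, say $\gamma_1^-\cap\textrm{supp}(\mu)=\emptyset$ so that $a_1^-=0$, this system becomes invertible and expresses both outgoing symbols $a_1^+,a_2^+$ as nonzero multiples of the single incoming symbol $a_2^-$; passing to the quadratic limit, the masses of $\mu$ along $\gamma_1^+$ and of $\nu$ along $\gamma_2^+$ are proportional to the mass of $\nu$ along $\gamma_2^-$, which is the asserted propagation from $\gamma_2^-$ to $\gamma_i^+$. The symmetric case $\gamma_2^-\cap\textrm{supp}(\nu)=\emptyset$ is identical.

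The main obstacle, and the reason the statement is only conditional, is the interference encoded by the cross terms $\textrm{Re}(a_i^-\overline{a_i^+})$ that survive when both incoming symbols are nonzero: the quadratic relations among the masses then involve undetermined interference densities, and destructive interference can cancel the mass on one outgoing ray without either incoming ray being quiet. Making the half-wave decomposition and the passage to the quadratic limit rigorous---controlling the remainders uniformly in $n$ and verifying that the cross terms drop out precisely when one incoming symbol vanishes---is the technical heart; following \cite{LG} and \cite{LameBG} it is carried out microlocally through a second-order reduction in $x_n$ with operator-valued coefficients, together with the positivity of the limiting measures. The glancing points $\G^{1,-},\G^{2,+}$ are again handled by \cite{LameBJ}, so that the relations above extend continuously up to the glancing rays.
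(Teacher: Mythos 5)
Your proposal follows the same microlocal route as the paper: local geodesic coordinates at the interface, factorization of $P_i$ into incoming/outgoing first-order factors (your ``half-wave decomposition'' with amplitudes $a_i^\pm$ is the symbolic counterpart of the paper's $(D_{x_n}-\Lambda^+)(D_{x_n}-\Lambda^-)$ reduction from \cite{LameBG}), elliptic regularity for item (1), and a Fresnel/Lopatinski-type linear system from the two transmission conditions for item (2), solved under the hypothesis that one incoming ray is quiet. For the configuration you treat, this is essentially the paper's argument. The substantive difference is what you discard at the outset: by invoking the positive distance between $\mathrm{supp}(b)$ and $\partial\Omega_2$ you reduce the flux condition to the clean relation $k_1\partial_n u=k_2\partial_n v$, and then the whole proposition is, as the paper itself says, the case that ``readily applies'' from \cite{LG,LameBG}. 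The proof the paper actually writes out is the adaptation to the case where the viscoelastic term reaches the interface, so that the flux condition reads $k_2\partial_n v=k_1\partial_n u-k_1\!\int g(t-s)b\,\partial_n u(s)\,ds$. The new ingredient there is Lemma \ref{invG}: $I-G$ is invertible on $L^2(\mathbb{R}\times\partial\Omega_2)$ because $\|g\|_{L^1}\|b\|_{L^\infty}<1$, which lets one write $k_1 D_{x_n}u^k|_{x_n=0}=k_2(I-G)^{-1}D_{x_n}v^k|_{x_n=0}$ and still extract a uniform Lopatinski condition after commuting $Q_0$ with $(I-G)^{-1}$ (a lower-order commutator). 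Your proposal contains none of this, so it proves the easy half of the statement and omits the half the paper's proof is devoted to. Two smaller remarks: your claim that \emph{both} outgoing amplitudes are nonzero multiples of the single incoming one is not quite right, since the reflected Fresnel coefficient $(k_2\xi_{n,2}-k_1\xi_{n,1})/(k_2\xi_{n,2}+k_1\xi_{n,1})$ can vanish at the impedance-matching angle (only the transmitted coefficient is always nonzero), though this does not affect the propagation statement; and note that the paper's proof actually starts from $\gamma_1^+\cap\mathrm{supp}(\mu)=\emptyset$ (an outgoing ray) while the statement and your argument use $\gamma_1^-$ --- a discrepancy internal to the paper rather than an error of yours.
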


\begin{corollary}\label{equivsuppmes}
For $\rho_2\in \H^2 \times \G^2$ such that the intersection of the bicharacteristic rays $\gamma_i^{\pm}$ is non-diffractive, we have the following equivalence
\begin{align*}
\left((\gamma_1^-)\cap \textrm{supp}(\mu_1)\right) &\cup \left((\gamma_2^-)\cap \textrm{supp}(\mu_2)\right) = \emptyset \\
&\Updownarrow \\
\left((\gamma_1^+)\cap \textrm{supp}(\mu_1) \right)&\cup\left( (\gamma_2^+)\cap \textrm{supp}(\mu_2)\right) = \emptyset. 
\end{align*}
\end{corollary}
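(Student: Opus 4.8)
The plan is to read off the equivalence directly from Proposition \ref{propag}, combined with the reversibility of the bicharacteristic flow. First I would check that the hypotheses put us in the second alternative of Proposition \ref{propag}. For $\rho_2 \in \H^2 \cup \G^2$ the transmission is never vacuous, and Snell's law \eqref{snell} together with $k_1<k_2$ yields $\sin\theta_1 = \sqrt{k_1/k_2}\,\sin\theta_2 < \sin\theta_2$, hence $\theta_1<\pi/2$ and the reflected/incident rays in $\Omega_1$ satisfy $\rho_1\in\H^1$. Thus exactly four transversal rays meet at the interface point: $\gamma_1^{\pm}$ carrying $\mu=\mu_1$ and $\gamma_2^{\pm}$ carrying $\nu=\mu_2$, and we are precisely in case (2) of Proposition \ref{propag}. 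The non-diffractive assumption ensures these four rays are distinct and that the generalized bicharacteristic through $\rho_2$ is uniquely defined, so the propagation statements apply without ambiguity.

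For the implication from the incoming side to the outgoing side, I would assume $(\gamma_1^-\cap\textrm{supp}(\mu_1))\cup(\gamma_2^-\cap\textrm{supp}(\mu_2))=\emptyset$, i.e.\ both $\gamma_1^-$ and $\gamma_2^-$ carry no mass. Feeding $\gamma_2^-\cap\textrm{supp}(\mu_2)=\emptyset$ into case (2) shows that the support of $\mu$ on the outgoing rays $\gamma_i^+$ is inherited entirely from $\gamma_1^-$; since $\gamma_1^-$ carries no mass, neither $\gamma_1^+$ nor $\gamma_2^+$ does, so the outgoing union is empty. The symmetric route, feeding $\gamma_1^-\cap\textrm{supp}(\mu_1)=\emptyset$ into case (2) and propagating $\nu$ from the clean ray $\gamma_2^-$, gives the same conclusion and serves as a consistency check.

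For the converse I would use the invariance of the generalized bicharacteristics and of the defect measures under the time reversal $(t,\tau)\mapsto(-t,-\tau)$. This involution preserves $\textrm{Char}(p_i)$ and the hyperbolic/glancing decomposition of $T^*((0,T)\times\d\Omega_2)$ while exchanging the incoming rays $\gamma_i^-$ with the outgoing rays $\gamma_i^+$, and it leaves $\mu$ and $\nu$ invariant because these arise from the time-symmetric quadratic quantities $|u^n|^2$ and $|v^n|^2$. Applying the already-proved implication to the relabelled configuration then turns an empty outgoing union into an empty incoming union, which is exactly the converse. Equivalently, one may argue by contraposition using positivity of $\mu,\nu$: if one incoming ray, say $\gamma_1^-$, carries mass while the other is clean, case (2) immediately propagates that mass to $\gamma_1^+,\gamma_2^+$; the only case left open by the forward machinery is when both $\gamma_1^-$ and $\gamma_2^-$ carry mass, which is resolved by the time-reversal symmetry, since no nonnegative mass can be created or annihilated at the interface.

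I expect the delicate point to be this converse, and more precisely the glancing regime $\rho_2\in\G^2$. Away from glancing, in $\H^1\times\H^2$, the four rays are transversal and the $\gamma^-\leftrightarrow\gamma^+$ relabelling is transparent; but on $\G^2$ one must invoke the non-diffractive hypothesis to guarantee that the transmitted/glancing ray in $\Omega_2$ detaches from and returns to the interface in a controlled way, so that the incoming/outgoing relabelling remains well defined and the interference law of case (2) continues to force the all-or-nothing behaviour of the measure. Establishing that this relabelling is legitimate at glancing points, rather than the bookkeeping in the transversal case, is where the main work lies.
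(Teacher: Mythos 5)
Your forward implication is sound: at a point of $\H^1\times(\H^2\cup\G^{2,+})$, cleanliness of both incoming rays forces cleanliness of both outgoing rays by case (2) of Proposition \ref{propag}, and your Snell's-law check that $k_1<k_2$ places $\rho_1$ in $\H^1$ is correct. The gap is in the converse, which is the entire content of the corollary. Your primary route, time reversal, never addresses the one feature of this system that actually threatens it: the memory term. The equation and, when $\textrm{supp}(b)$ reaches the interface, the flux condition $k_2 D_{x_n}v=k_1(I-G)D_{x_n}u$ involve the causal convolution $\int_{-\infty}^{t}g(t-s)\,\cdot\,ds$, which under $(t,\tau)\mapsto(-t,-\tau)$ becomes anticausal; before invoking the forward implication on the reversed sequence you would have to re-establish Proposition \ref{propag} for the reversed kernel (Lemma \ref{invG} and the commutator bound for $[Q_0,(I-G)^{-1}]$ do survive, but this must be said, and the measures are pulled back under the involution rather than ``invariant''). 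You instead locate the difficulty at glancing points, which is not where it lies. Your fallback argument is circular: ``no nonnegative mass can be created or annihilated at the interface'' is precisely the statement to be proved. Proposition \ref{propag} only covers configurations in which at least one \emph{incoming} ray is clean; the dangerous scenario for the converse is exactly the one it leaves open, namely both incoming rays carrying mass that interferes destructively on both outgoing rays at once, and excluding this requires a quantitative argument, not bookkeeping.

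The paper's proof supplies that argument directly and symmetrically, with no time reversal. Each clean ray yields one microlocal trace relation $Q_0(D_{x_n}w^k|_{x_n=0}-\Lambda^{\mp}w^k|_{x_n=0})\to 0$ for the corresponding solution $w^k$. Two clean rays give two such relations, and since $\Lambda^{-}-\widetilde{\Lambda}^{+}$ is elliptic of order one at the hyperbolic points under consideration, the pair forces both the Dirichlet and the Neumann traces of that solution to vanish microlocally (this is \eqref{neuconv}--\eqref{tangconv}). The transmission conditions $u=v$ and $k_2 D_{x_n}v=k_1(I-G)D_{x_n}u$, together with Lemma \ref{invG} and the fact that $[Q_0,(I-G)^{-1}]$ has order $-1$, then transport the vanishing of the full Cauchy data of $u^k$ to that of $v^k$ (or conversely), so all four rays are clean. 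This mechanism is indifferent to whether the two clean rays are the two incoming or the two outgoing ones, which is why it delivers the equivalence in one stroke; if you keep your structure, the time-reversal step should be replaced by this trace computation.
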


If $\textrm{supp}(b) \cap \d \Omega \neq \emptyset$, then we use the following lemma to deal with the memory term in the boundary condition to adapt the proof of Proposition \ref{propag} and Corollary \ref{equivsuppmes}. Let us first define for $f\in L^2( \R \times \d \Omega_2)$
\begin{equation}\label{G}
Gf:= \int_{-\infty}^\infty g(t-s) b(x') f(x',t) ds,
\end{equation}
where $g$ is extended by zero to define $G$. We have
\begin{lemma}\label{invG}
Let $f\in L^2( \R \times \d \Omega_2)$, assume $b$ and $g$ satisfy the hypothesis of Theorem \ref{main} and $G$ is defined as in \eqref{G}. Then $I-G$ is invertible from $L^2( \R \times \d \Omega_2)$ to itself. 
\end{lemma}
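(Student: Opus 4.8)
The plan is to recognize $G$ as the composition of two elementary bounded operators on the Hilbert space $L^2(\R\times\d\Omega_2)\cong L^2(\d\Omega_2;L^2(\R_t))$: pointwise multiplication by the trace $b(x')$ of $b$ on the interface, followed by convolution in the time variable against the kernel $g$ (extended by zero to $(-\infty,0)$), exactly as in the memory term. Since both factors are bounded, the whole matter reduces to showing that $G$ is a strict contraction, after which $I-G$ is automatically invertible via the Neumann series $(I-G)^{-1}=\sum_{n\geq 0}G^n$, convergent in operator norm.

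First I would estimate the time-convolution factor by Young's inequality: convolution with $g$ maps $L^2(\R_t)$ into itself with norm at most $\|g\|_{L^1(\R)}$, and since $g\geq 0$ and is extended by zero, $\|g\|_{L^1(\R)}=\int_0^\infty g(s)\,ds=k_0$ by \eqref{solid0}. The multiplication factor $f\mapsto b(x')f$ has norm at most $\|b\|_{L^\infty(\d\Omega_2)}$, and since $\d\Omega_2\subset\overline{\Omega}_1$ and $b\in C^0(\overline{\Omega}_1)$ this is bounded by $\|b\|_{L^\infty(\Omega_1)}$. Combining the two, for every $f\in L^2(\R\times\d\Omega_2)$,
\[
\|Gf\|_{L^2(\R\times\d\Omega_2)}^2=\int_{\d\Omega_2}|b(x')|^2\,\|g*f(x',\cdot)\|_{L^2(\R_t)}^2\,d\sigma(x')\leq\big(k_0\|b\|_{L^\infty(\Omega_1)}\big)^2\,\|f\|_{L^2(\R\times\d\Omega_2)}^2,
\]
so that $\|G\|_{\mathcal{L}(L^2)}\leq k_0\|b\|_{L^\infty(\Omega_1)}$.

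The decisive point is then the analytic hypothesis \eqref{solid} inherited from Assumption \ref{ass:1}: the requirement $l:=1-k_0\|b\|_{L^\infty(\Omega_1)}>0$ says precisely that $\|G\|\leq 1-l<1$. Hence $G$ is a strict contraction, $I-G$ is invertible on $L^2(\R\times\d\Omega_2)$, and the inverse enjoys the quantitative bound $\|(I-G)^{-1}\|\leq 1/l$. I would emphasize that only the smallness condition \eqref{solid} and the normalization \eqref{solid0} enter here; the monotonicity of $g$ and Assumption \ref{ass:2} are not needed for the invertibility.

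There is no genuine obstacle in this argument; the only care required is bookkeeping. Specifically, one must confirm that extending $g$ by zero leaves $\|g\|_{L^1(\R)}=k_0$ unchanged (so the causal structure of the convolution does not inflate the norm), and that the trace of $b$ on $\d\Omega_2$ is controlled by the interior sup norm appearing in \eqref{solid}. Should one prefer to avoid the Neumann series, an equivalent route is to conjugate $G$ by the time Fourier transform, turning it into multiplication by $b(x')\widehat{g}(\tau)$ on $L^2(\d\Omega_2\times\R_\tau)$; the bound $|\widehat{g}(\tau)|\leq\|g\|_{L^1}=k_0$ gives $|1-b(x')\widehat{g}(\tau)|\geq l>0$ uniformly, so the multiplier, and therefore $I-G$, is invertible with the same norm control.
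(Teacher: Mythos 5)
Your proof is correct and takes essentially the same route as the paper: Young's inequality for the time convolution gives $\|G\|\leq \|g\|_{L^1(\R)}\,\|b\|_{L^\infty(\d \Omega_2)}<1$, so $G$ is a strict contraction and $I-G$ is invertible by the Neumann series. The only cosmetic difference is that you invoke the product bound $k_0\|b\|_{L^\infty}<1$ directly from \eqref{solid}, whereas the paper bounds the two factors $\|g\|_{L^1(\R)}$ and $\|b\|_{L^\infty(\d\Omega_2)}$ separately.
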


\beginpf

The invertibility of $I-G$ comes from $\|Gf\|_{L^2( \R \times \d \Omega_2)}< \|f\|_{L^2( \R \times \d \Omega_2)}$. Indeed, from Young's inequality for the convolution, we have
\begin{align*}
\|Gf\|_{L^2( \R \times \d \Omega_2)} =& \left\| \int_{-\infty}^\infty g(t-s)b(x')f(x',t) ds \right\|_{L^2( \R \times \d \Omega_2)} \\ 
\leq & \|g\|_{L^1(\R)} \|b f\|_{L^2(\R \times \d \Omega_2)} \\
\leq & \|g\|_{L^1(\R)} \|b\|_{L^\infty(\d \Omega_2)} \| f\|_{L^2(\R \times \d \Omega_2)}.
\end{align*}
But from the hypothesis on $b$ and $g$, we have  $\|b\|_{L^\infty(\d \Omega_2)}\leq1$ and $\|g\|_{L^1(\R)} < 1$, hence the result. 

\endpf

We proceed to adapt the proof of Proposition \ref{propag}. 

\beginpf 

We consider the most technical case $(\rho_1,\rho_2)\in \H^1 \times (\H^2 \cup \G^{2,+})$, as the other cases follow using the ellipticity when $\rho_2\in \E^2$. We recall the procedure described in \cite[Appendix A.2]{LameBG}. Let the local geodesic coordinates $(x_n,x')$ are such that, locally, we have $\Omega_2=\{x_n >0\}, \d \Omega_2=\{x_n=0\}$ and $\Omega_1=\{x_n <0\}$. Recall that near this point, $u_k$ is strongly converging in $H^1((0,T)\times \Omega_1)$ and we have the expression of $P_1$ given by \eqref{EqP1}. Therefore, 
\[
P_i=k_i(\d_{x_n}^2+R(x_n,x',D_{x'}))
\]
where the principal symbol of $R$ is $r(x_n,x',\xi')$ and $r(x_n,x',\xi')\geq c|\xi'|^2$. One can then use \cite[Lemma A.1]{LameBG} to factorise the pseudodifferential operators in two different ways
\begin{align*}
P_i= k_i(D_{x_n}-\Lambda^+(x_n,x',D_{x'}))(D_{x_n}-\Lambda^-(x_n,x',D_{x'}))+T(x_n,x',D_{x'}) \\
P_i=k_i(D_{x_n}-\widetilde{\Lambda}^-(x_n,x',D_{x'}))(D_{x_n}-\widetilde{\Lambda}^+(x_n,x',D_{x'}))+\widetilde{T}(x_n,x',D_{x'}) 
\end{align*}
where $\Lambda^\pm$ and $\widetilde{\Lambda}^\pm$ are tangential pseudodifferential operators of order 1 and such that $\sigma_1(\Lambda^\pm)=\pm \sqrt{r}$ and $\sigma_1(\widetilde{\Lambda}^\pm)=\pm \sqrt{r}$ and where $T$ and $\widetilde{T}$ are tangential pseudodifferential operators of order $-\infty$. A microlocalisation near of $\gamma_i^{\pm}$ is done using $q_0(x',\xi_i')$ a symbol of order $0$ and equal to $1$ in a conical neighborhood of $\rho_i$ and of compact support. Let us remark that at a point $(\rho_1,\rho_2)\in \H^1 \times (\H^2 \cup \G^{2,+})$, the tangential components of $\rho_1=(x_n,x',\xi_1^n,\xi_1')$ and $\rho_2=(x,y,\xi_2^n,\xi_2')$ are equal : $(x',\xi_1')=(x',\xi_2')$. Therefore, the same symbol $q_0$ may be used for the microlocalisation near $\rho_1$ and $\rho_2$. The symbol is then propagated by the Hamiltonian 
\[
(c_i\d_{x_n} \mp H_{\sqrt{r}})q_i^{\pm}=0, \quad \restriction{q_i^{\pm}}{x_n=0}=q_0. 
\]
Consider $\varphi \in C_0^\infty( \R_x)$ to be equal to $1$ near $0$ and of compact support near $0$. If we denote $Q_i^\pm = \textrm{Op}(\varphi q_i^\pm)$ and $Q_0 = \textrm{Op}(\varphi q_0)$, then we obtain the same results as in \cite[Appendice A.2]{LameBG}, that is, if we consider bounded sequences $(u^k,v^k) \subset H^1(]-1,0[\times Y) \times H^1(]0,1[\times Y)$, where $Y=\{ x' \, | \, |x'| <1 \} $, such that 
\begin{align*}
P_1 u& \rightarrow 0 \textrm{ in } L^2(]-1,0[\times Y), \\
P_2 v& \rightarrow 0 \textrm{ in } L^2(]0,1[\times Y).
\end{align*}
and if we suppose, without loss of generality, that $\gamma_1^+ \cap \textrm{supp}(\mu) = \emptyset$, then (recall that we can deduce that $\restriction{u^k}{x=0}$ and $\restriction{D_x u^k}{x=0}$ are bounded in $H^1_{\rho_i}$ and $L^2_{\rho_i}$ respectively) 
\begin{equation}\label{reltrace}
k_1Q_0(\restriction{D_{x_n} u^k}{x_n=0}-\Lambda^- \restriction{u^k}{x_n=0}) \rightarrow 0 \textrm{ in } L^2(Y).
\end{equation}
We deduce the relation between the traces of $v$ by applying $\Lambda^-$ to the relation $u=v$ and by using Lemma \ref{invG} to obtain 
\[
k_1 \restriction{D_{x_n} u^k}{x_n=0} = k_2(I-G)^{-1}\restriction{D_{x_n} v^k}{x_n=0}. 
\]
These two relations together with \eqref{reltrace} implies 
\[
k_1Q_0\left( \frac{k_2}{k_1}(I-G)^{-1}\restriction{D_{x_n} v^k}{x_n=0}-\Lambda^- \restriction{v^k}{x_n=0}\right) \rightarrow 0 \textrm{ in } L^2(Y),
\]
which is a Lopatinski condition uniformly in $(\rho_1,\rho_2) \in \H^1 \times (\H^2 \cup \G^2)$. 

\endpf

We now turn ourselves to the proof of Corollary \ref{equivsuppmes}. 

\beginpf 

The proof of Corollary \ref{equivsuppmes} follow closely the proof of Proposition \ref{propag} with the assumption that two rays do not intersect the support of the microlocal defect measure. Assume for simplicity that $\gamma_1^\pm \cap \, \textrm{supp}(\mu) = \emptyset$. Then we obtain the relations
\begin{align*}
k_1Q_0(\restriction{D_{x_n} u^k}{x_n=0}-\Lambda^- \restriction{u^k}{x_n=0}) \rightarrow 0 \textrm{ in } L^2(Y), \\
k_1Q_0(\restriction{D_{x_n} u^k}{x_n=0}-\tilde{\Lambda}^+ \restriction{u^k}{x_n=0}) \rightarrow 0 \textrm{ in } L^2(Y),
\end{align*}
from which we deduce 
\begin{align}
Q_0(\restriction{D_{x_n} u^k}{x_n=0})  \rightarrow 0 \textrm{ in } L^2(Y), \label{neuconv}\\
Q_0((\Lambda^- - \tilde{\Lambda}^+) \restriction{u^k}{x_n=0}) \rightarrow 0 \textrm{ in } L^2(Y). \label{tangconv}
\end{align}
Together with Lemma \ref{invG}, we deduce from \eqref{neuconv}
\[
Q_0((I-G)^{-1}\restriction{D_{x_n} v^k}{x_n=0})  \rightarrow 0 \textrm{ in } L^2(Y).
\]
Notice that $[Q_0,(I-G)^{-1}]$ is a tangential pseudodifferential operator of order $-1$. Therefore, by commuting $Q_0$ with $(I-G)^{-1}$ and inverting once again $(I-G)^{-1}$ implies 
\[
Q_0(\restriction{D_{x_n} v^k}{x_n=0})  \rightarrow 0 \textrm{ in } L^2(Y),
\]
Using \eqref{tangconv}, we obtain 
\begin{align*}
k_1Q_0(\restriction{D_{x_n} v^k}{x_n=0}-\Lambda^- \restriction{v^k}{x_n=0}) \rightarrow 0 \textrm{ in } L^2(Y), \\
k_1Q_0(\restriction{D_{x_n} v^k}{x_n=0}-\tilde{\Lambda}^+ \restriction{v^k}{x_n=0}) \rightarrow 0 \textrm{ in } L^2(Y),
\end{align*}
the desired result. 

\endpf

\subsection{Uniformly escaping geometry}

We detail here the geometrical argument to conclude on the weak observability. The geometrical argument relies on the construction done in \cite{LG}, which we adapt in the case of a distributed damping. We easily transfer the geometrical construction in the boundary case \cite{LG} to the distributed case in the following way. We define $\Gamma_1 \subset \d \Omega$ the points of $\d \Omega$ such that for all $x\in \Gamma_1$, the ray in the inward normal direction intersects transversally $\textrm{supp}(b)$ (see figure \ref{Gam1}). Since the interior of $\textrm{supp}(b)$ is assumed non-empty, $\Gamma_1$ is non-empty and open in $\d \Omega$. 
\begin{figure}[!ht]
\begin{center}
	\includegraphics[height=3.5cm]{domain2.pdf}
	\includegraphics[height=3.5cm]{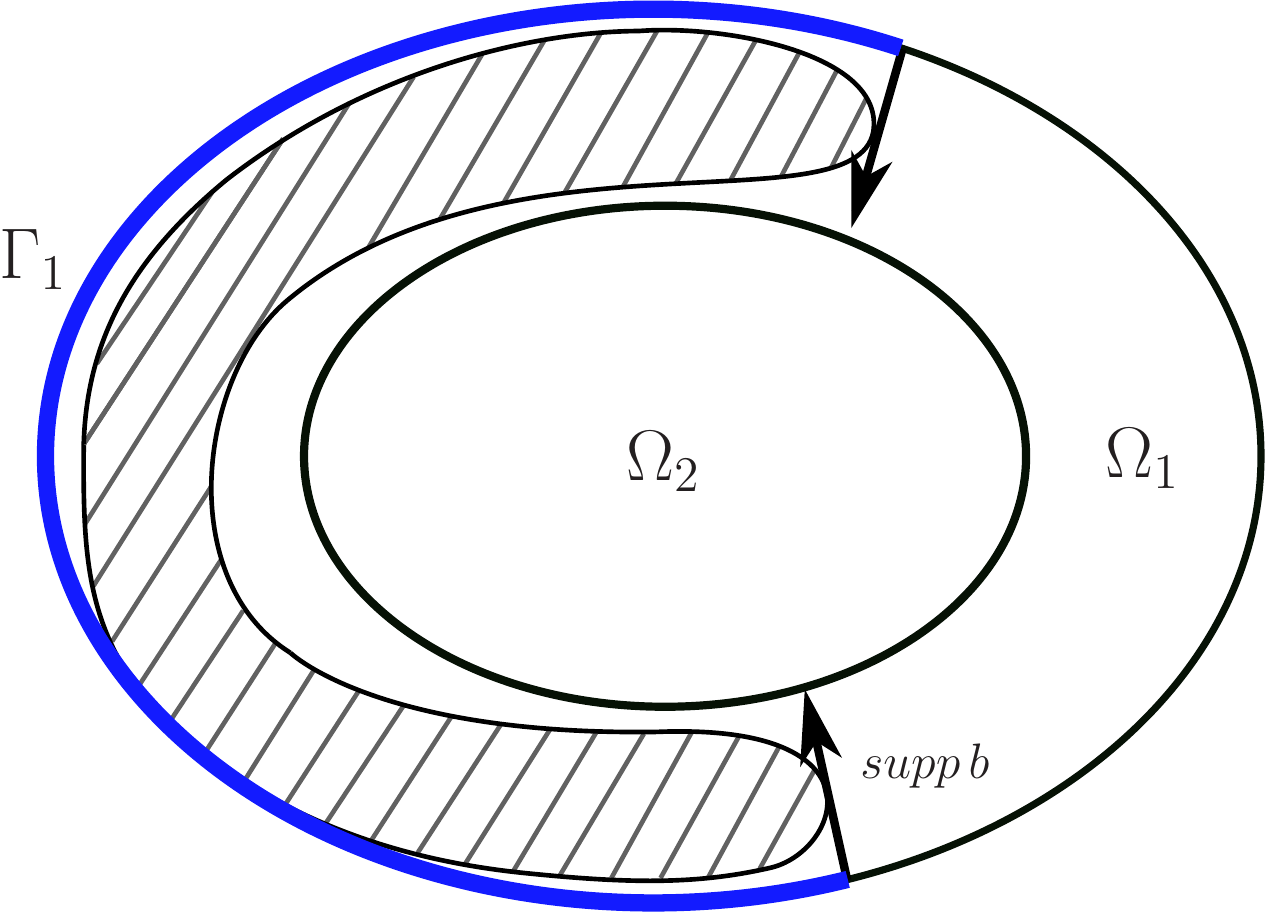}
\end{center}
\caption{\footnotesize{Construction of $\Gamma_1$}}\label{Gam1}
\end{figure}

We assume the following geometrical assumptions
\begin{ass}\label{x0}
There exists $x_0$ such that $\Gamma_1=\Gamma(x_0)$ where 
\begin{equation}\label{defgammax0}
\Gamma(x_0):=\{ x\in \d \Omega \, | \, \<(x-x_0),n(x)\> >0 \}.
\end{equation}
\end{ass}
\begin{ass}\label{weakgcc}
Consider $\rho^+ \in \H$ such that $\Pi_x(\rho^+) \in \Gamma_1$. Then, one of the two rays $\gamma^+$ or $\gamma^-$ associated to $\rho^+$ or $\rho^-$ intersects transversally $\textrm{supp}(b)$ before, eventually, intersecting transversally $\d \Omega_2$. 
\end{ass}
In the previous assumption, $\Pi_x$ denote the spatial projection of $\rho^+ \in T^*(\d \Omega \times (0,T))$ and $\rho^-$ is the coordinate of the reflection associated to $\rho^+ \in \H$, that is $j(\rho^+)=j(\rho^-)$.
 
Assumption \ref{x0} ensures that the uniformly escaping condition implies the weak observability. We highlight that Assumption \ref{x0} alone does not ensure the weak observability due to the interference phenomenon at the interface \cite{LG}. Assumption \ref{weakgcc} implies that the rays outgoing from $\Gamma_1$ do not intersect the support of $\mu$. Indeed, for every $\rho^+ \in T^*(\d \Omega \times (0,T))$, either the ray from $\rho^+$ or $\rho^-$ intersects $\textrm{supp}(b)$ after a finite number of reflection on $\d \Omega$. The results therefore follow from classical results on the wave equation with homogeneous boundary condition. The same holds for $\rho \in \G \times \E$. This allow us to consider $\Gamma_1$ as an observable boundary region, and then one can proceed with the construction \cite{LG} to verify if $\Gamma_1$ satisfies the uniform escaping condition. We recall that this construction consists to prove that $\Gamma_1$ implies the existence of $\Gamma_2 \subset \d \Omega_2$ such that every ray starting from this part of the interface do not contribute to the support of the mesure $\mu$ or $\nu$. Moreover, it is shown in \cite{LG} that Assumption \ref{x0} and \ref{weakgcc} and from the geometrical construction in \cite{LG}, that $\Gamma_2$ satisfies GCC for $\Omega_2$. 
\begin{figure}[!ht]
\begin{center}
	\includegraphics[height=3.5cm]{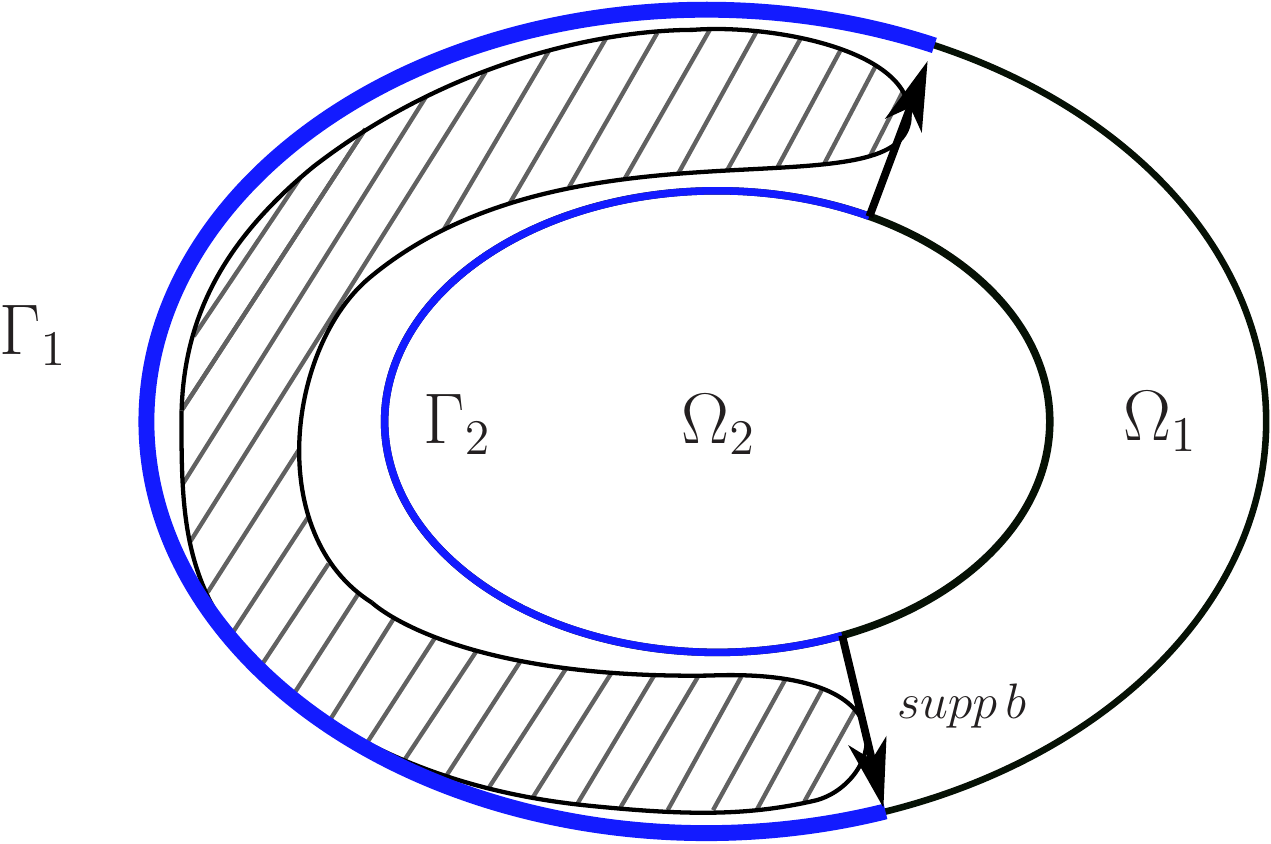}
\end{center}
\caption{\footnotesize{Representation of $\Gamma_2$ after the iterative construction in \cite{LG}}}
\end{figure}

\begin{figure}[!ht]
\begin{center}
	\includegraphics[height=3.5cm]{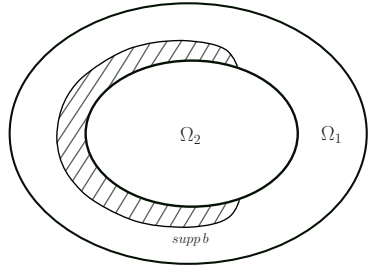} 
\end{center}
\caption{\footnotesize{Representation of a spatial domain for \eqref{MP1} in the case where the geometrical construction yields Assumption \ref{x0} but not Assumption \ref{weakgcc} and therefore for which we do not have observability due to the presence of rays propagating near $\d \Omega$ without encountering $supp(b)$.}}
\end{figure}

We then define, using the remaining part of the boundary $\d \Omega \setminus \Gamma_1$ and $\d \Omega_2 \setminus \Gamma_2$, a region $\Omega_1^f \subset \Omega_1$. We then say that $\Omega_1^f$ satisfies the uniformly escaping geometry condition if every ray from $\Omega_1^f$ escape uniformly to $\Omega_1 \setminus \Omega_1^f$ were the rays are observed. 
\begin{figure}[!ht]
\begin{center}
	\includegraphics[height=3.5cm]{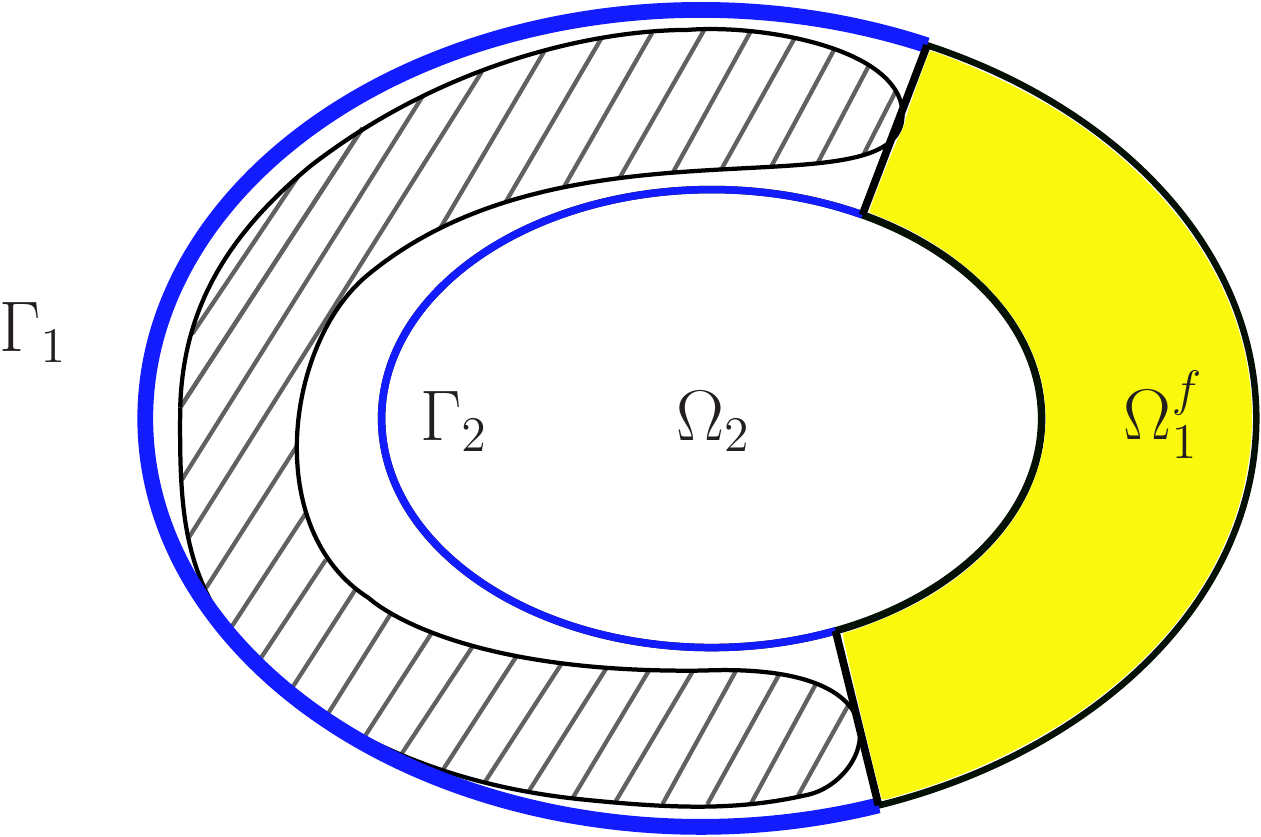}
\end{center}
\caption{\footnotesize{Representation of $\Omega_1^f$}}
\end{figure}
In order to recall precisely the uniformly escaping geometry condition, let us introduce the collision map in the billiard literature (\cite{Billiards}) for $\Omega_1$
\begin{align}\label{billardmap}
\F :  \left(\d \Omega \cup \d \Omega_2 \right)\times \R^2 & \longrightarrow \left(\d \Omega \cup \d \Omega_2 \right)\times \R^2, \\
 (x,\xi)\qquad  & \longmapsto \qquad (x^1,\xi^1), \nonumber
\end{align}
where $x^1$ is the point where the ray of $\Omega_1$ starting from $x$ travelling in the $\xi$ direction at constant speed and in straight line intersects $\d \Omega \cup \d \Omega_2$ and $\xi^1 \in \R^2$ is the direction of the outgoing ray reflected according to the law of the optic geometry. We highlight that not all $\xi \in \R^2$ are admissible directions for \eqref{billardmap} but it is costumary to identify these directions to their unique outgoing direction. We further assume that the boundary $\d \Omega_2 \setminus \Gamma_2$ is parametrized by $\delta_2(s), s\in [0,1]$. 

\begin{definition}[Uniformly escaping geometry]
We say that $\Omega_1^f$ is a uniformly escaping geometry if the application 
\begin{align*}\label{directionmap}
\M:  (\d \Omega_2 \setminus \Gamma_2) \times \R^2 & \longrightarrow \qquad \qquad \R \\
(x,\xi) \quad & \longmapsto  \<\xi,n\left(\Pi_x\left(\F\left(x,\xi \right)\right)\right)^\perp\>
\end{align*}
is nondecreasing for $s \mapsto \M(\delta_2(s),n_2(\delta_2(s))), \delta_2(s)\in \d \Omega_2 \setminus \Gamma_2$.
\end{definition}

The name escaping geometry refers to the work of Miller in \cite{Escape} on escape functions where GCC is reinterpreted in terms of escaping rays. This notion is appropriate in the context of rays crossing an interface as there is two ways for rays to escape $\Omega_1^f$ : either through the interface $\d \Omega_2 \setminus \Gamma_2$ or by $\Omega_1 \setminus \Omega_1^f$.

The uniformly escaping geometry ensures that every rays propagating in $\Omega_1^f$ will be observed. Indeed, by definition, a ray starting in the $\M(x,n_2(x))=0, x\in \d \Omega_2 \setminus \Gamma_2$ region and in the $n_2(x)$  direction satisfy $\F^2(x,n_2(x))=(x,n_2(x))$ by definition. Since this is an escaping direction for $\d \Omega_2 \setminus \Gamma_2$, this ray is assumed to have escaped $\Omega_1^f$ (see the light green ray in figure \ref{Uegfig} on the left). This ray will in fact be observed by $\Gamma_2$. A ray starting in the $\M(x,n_2(x))<0$ region in the $n_2(x)$ direction will eventually escape through $\Omega_1 \setminus \Omega_1^f$ thanks to the nondecreasing assumption on $\M$ (see the light green ray in figure \ref{Uegfig} on the right). The same description holds for rays in the $\M(x,n_2(x))>0$ region in the $n_2(x)$ direction. The complete picture can be deduced by this analysis. Indeed, consider a ray in the $M(x,n_2(x))<0$ region propagating in the opposite direction of the parametrization of $\delta$. It is always possible to follow such a ray since $\xi \neq n_2(x)$ implies that $x$ is a point where a reflection occurs. One can then choose to follow the half-ray propagating in the opposite direction of the parametrization. Since the ray propagating in the $n_2(x)$ direction have escaped in the opposite direction of propagation, then the half-ray propagating in the same direction also escape through the same boundary (see the dark green ray in figure \ref{Uegfig} on the right). The case $M(x,n_2(x))<0$ is symmetric and one can follow either half-ray in the region $M(x,n_2(x))=0$ as both half-ray escapes uniformly (see the dark green ray in figure \ref{Uegfig} on the left for the propagation of one of the half-ray). 

\begin{figure}[!ht]
\begin{center}
	\includegraphics[height=2cm]{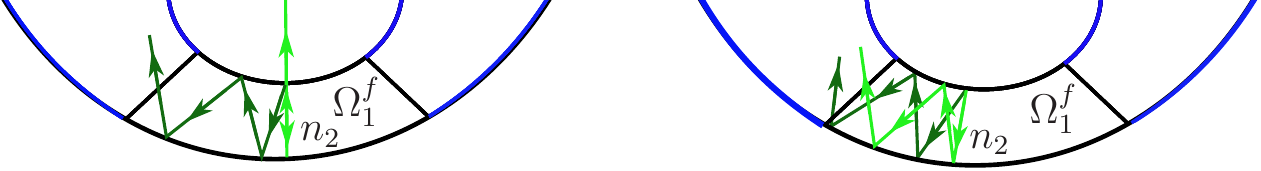} 
\end{center}
\caption{\footnotesize{Example of a uniformly escaping geometry. Left : (light green) ray from $x$ in the $n_2$ direction such that $\M(x,n_2(x))=0$ - (dark green) half-ray from the same point propagating in the negative tangential direction (with respect to $\delta'$). Right : (light green) ray from $x$ in the $n_2$ direction such that $\M(x,n_2(x))<0$ - (dark green) half-ray from the same point propagating in the negative tangential direction (with respect to $\delta'$). }}\label{Uegfig}
\end{figure}

The weak observability then follow from the uniformly escaping geometry condition. 

\begin{lemma}
Suppose $\Omega_1^f$ satisfies the uniformly escaping geometry condition. Then, $\mu, \nu=0$ near $\rho \in T^*(\Omega_1^f \times (0,T))$. 
\end{lemma}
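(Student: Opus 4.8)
The plan is to follow the rays originating in $\Omega_1^f$ and show that each one eventually reaches a region where the defect measures $\mu,\nu$ are already known to vanish, and then propagate this vanishing backward to $\rho$ using the propagation results established above. The two regions where we know the measure vanishes are: the support of $b$ (where $\mu=0$ by Lemma \ref{strongcvu}) and the rays outgoing from $\Gamma_1$ (which do not meet $\textrm{supp}(\mu)$ by the discussion following Assumption \ref{weakgcc}). The key mechanism is the interplay between the uniformly escaping geometry condition, which controls how rays leave $\Omega_1^f$, and Corollary \ref{equivsuppmes}, which transfers vanishing of the measure across the interface.

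First I would fix $\rho \in T^*(\Omega_1^f \times (0,T))$ lying in $\textrm{Char}(p_1)$ (outside the characteristic set the measure vanishes trivially) and consider the generalized bicharacteristic $\gamma$ through $\rho$. By the uniformly escaping geometry condition, the collision map $\F$ and the monotonicity of $\M$ guarantee that $\gamma$, traced forward (or backward), leaves $\Omega_1^f$ after finitely many reflections, escaping either through $\d \Omega_2 \setminus \Gamma_2$ into $\Omega_2$, or through $\d \Omega \setminus \Gamma_1$ and thence into $\Omega_1 \setminus \Omega_1^f$. The heart of the argument is that in both cases the ray is eventually \emph{observed}: a ray escaping through the interface is, by the construction of $\Gamma_2$ recalled from \cite{LG}, transmitted to $\Omega_2$ where $\Gamma_2$ satisfies GCC, so the corresponding segment of $\gamma_2$ reaches $\Gamma_2$ and carries no measure; a ray escaping through $\Omega_1 \setminus \Omega_1^f$ either hits $\textrm{supp}(b)$ (where $\mu=0$) or reaches $\Gamma_1$ (where the outgoing rays avoid $\textrm{supp}(\mu)$ by Assumption \ref{weakgcc}).

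Next I would propagate the vanishing back along $\gamma$ to the starting point $\rho$. Along straight segments inside $\Omega_1$ or $\Omega_2$ the measures are invariant on $\textrm{Char}(p_i)$ by Theorem \ref{mesinv}, so vanishing at one point of a segment gives vanishing along the whole segment. At a reflection on $\d \Omega$ or $\d \Omega_2 \setminus \Gamma_2$ the hyperbolic/glancing reflection lemma transfers $\emptyset$-intersection from $\gamma^+$ to $\gamma^-$. At each crossing of the interface I would invoke Corollary \ref{equivsuppmes}: if both outgoing rays $\gamma_1^+,\gamma_2^+$ miss $\textrm{supp}(\mu)\cup\textrm{supp}(\nu)$, then so do the two incoming rays $\gamma_1^-,\gamma_2^-$. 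Chaining these transfers backward through the finitely many reflections and transmissions dictated by the escaping geometry yields $\mu,\nu=0$ near $\rho$. Since $\rho$ was arbitrary, the lemma follows.

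The main obstacle I anticipate is the bookkeeping at the interface, and in particular ensuring that the hypotheses of Corollary \ref{equivsuppmes} are genuinely available at each crossing — namely that at least one of the two incoming (or outgoing) rays is already known to be measure-free, since the corollary is an equivalence rather than a one-sided implication. This is exactly where the uniformly escaping geometry condition is essential: the monotonicity of $\M$ must be used not merely to guarantee escape, but to guarantee that the \emph{interfering} ray coming from $\Omega_2$ is also one that escapes and is observed, so that the concentration-of-energy phenomenon described in the introduction cannot perpetuate indefinitely. A secondary technical point is the treatment of glancing and diffractive intersections with $\d \Omega_2$; here one restricts to non-diffractive crossings as in Corollary \ref{equivsuppmes} and uses the no-infinite-order-contact hypothesis together with the strict convexity of $\Omega_2$ to ensure the bicharacteristic flow is well defined and that gliding rays on the interface are themselves eventually transmitted and observed.
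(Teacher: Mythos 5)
Your proposal is correct and follows essentially the same route as the paper's proof: trace the ray out of $\Omega_1^f$ using the monotonicity of $\M$, observe it either via $\textrm{supp}(b)$ and $\Gamma_1$ or, after transmission, via $\Gamma_2$ (which satisfies GCC for $\Omega_2$), and propagate the vanishing of $\mu,\nu$ back to $\rho$ through the finitely many reflections and interface crossings using Proposition \ref{propag} and Corollary \ref{equivsuppmes}. The interference difficulty you flag at the end is exactly the point the paper handles by following the transmitted half-ray in $\Omega_2$ in the direction of the escaping ray, so your anticipated resolution matches the published argument.
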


\beginpf

From the uniformly escaping geometry condition and by the geometrical construction, every ray escaping to $\Omega_1 \setminus \Omega_1^f$ is observed. Therefore, consider first $(\rho_1,\rho_2) \in \H^1 \times \E^2$ such that $\Pi_x(\rho_1)\in \d \Omega_2 \setminus \Gamma_2$. Then, from the uniformly escaping geometry condition implies that one of the two outgoing half-ray escapes to $\Omega_1 \setminus \Omega_1^f$ after, eventually, a finite number of reflection on $\Gamma_2$. If there is no reflection on $ \d \Omega_2 \setminus \Gamma_2$ this implies that $\mu, \nu=0$ near $(\rho_1,\rho_2)$. The same holds if there is no transmission. Otherwise, if there is a transmission, then it suffices to follow the transmitted half ray propagating in $\Omega_2$ (locally) in the direction of the escaping ray. This ray may eventually intersect $\Gamma_2$ a finite number of time before reaching $\Gamma_2$ (that satisfies GCC for $\Omega_2$). For every intersection with $\Gamma_2$, one uses the same argument to conclude that this ray is also observed (see figure \ref{figpropueg}). The previous argument holds for $(\rho_1,\rho_2) \in \H^1 \times (\H^2 \cup \G^2)$ which ends the proof. 

\endpf

\begin{figure}[!ht]
\begin{center}
	\includegraphics[height=2cm]{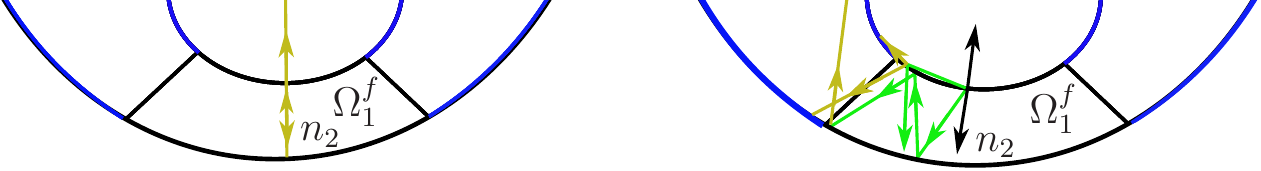}
\end{center}
\caption{\footnotesize{Left : the ray starting from $x$ in the $n_2$ direction such that $\M(x,n_2(x))=0$ is directly observed when transmitted to $\Omega_2$. Right : the propagation of the half-rays in the negative tangential direction from a point $x$ such that $\M(x,n_2(x))<0$.}}\label{figpropueg}
\end{figure}

\subsection{Strong observability}

We finally use the weak observability inequality to prove the observability inequality. To this end, we define the set of invisible solutions 
\begin{align*}
N_T:= & \Bigg\{ (u,v,\eta) \in H^1((0,T)\times \Omega_1) \times H^1((0,T)\times \Omega_2) \times H^1((0,T);H^1(\R^+;V)) \, \Bigg|   \\
& \, \, \, \, \,  (u,v,\eta) \textrm{ is a solution of } \eqref{AUP} \textrm{ s.t. } \restriction{(u,u_t,v,v_t,\eta)}{t=0}\in X, \restriction{\eta}{s=0}=0,    \\
& \, \, \, \,   \textrm{ and} \int_0^\infty (-g'(s)) \int_{\Omega_1} b(x) | \nabla \eta |^2 dxds =0  \Bigg\}
\end{align*}
endowed with the norm $\| (u,v,\eta) \|_{N_T} = \| (u,v) \|_{H_1}+\|\eta\|_{H^1((0,T);H^1(\R^+;V))}$. 
\begin{lemma}\label{leminv}
We have $N_T=\{(0,0,0)\}$. 
\end{lemma}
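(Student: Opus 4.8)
\emph{Strategy.} The plan is to prove $N_T=\{0\}$ by the spectral reduction of the uniqueness--compactness method. First I would show that $N_T$ is finite dimensional. For $(u,v,\eta)\in N_T$ the observation term in the weak observability inequality \eqref{weakobs} vanishes, so \eqref{weakobs} reduces to $E(0)\leq c(T)\,\|(u_0,u_1,v_0,v_1,\eta_0)\|_{X^{-1}}^2$; since $E(0)$ is equivalent to the square of the $X$-norm of the datum, the norms $\|\cdot\|_X$ and $\|\cdot\|_{X^{-1}}$ are equivalent on $N_T$, and the compactness of the embedding $X\hookrightarrow X^{-1}$ (Rellich for the $(u,u_t,v,v_t)$ components, the memory component being the only technical point) forces $\dim N_T<\infty$.

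\emph{Invariance and reduction to an eigenvalue problem.} Because $-g'(s)\geq \tfrac1c\,g(s)>0$ by Assumption \ref{ass:2}, the invisibility constraint is in fact equivalent to $\sqrt{b}\,\nabla\eta(\cdot,t,\cdot)=0$, that is $\nabla\eta=0$ on $\textrm{supp}(b)$ for every $t$. Applying $\nabla$ to the transport relation $\eta_t+\eta_s=u_t$ then gives $\nabla u_t=0$ on $\textrm{supp}(b)$, and the same computation applied to $\partial_t(u,v,\eta)$ shows that $\mathcal{A}U\in N_T$ whenever $U\in N_T$. Hence $\mathcal{A}$ restricts to an endomorphism of the finite-dimensional space $N_T$; if $N_T\neq\{0\}$, it admits an eigenvalue $\lambda$ and a nonzero, smooth (by elliptic regularity) eigenvector $U=((u_1,v_1),(u_2,v_2),\eta_0)$, which I would then analyze.

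\emph{The eigenfunction.} From $\mathcal{A}U=\lambda U$ one gets $u_2=\lambda u_1$, $v_2=\lambda v_1$, and, solving $\eta_{0,s}+\lambda\eta_0=\lambda u_1$ with $\eta_0(0)=0$, the explicit form $\eta_0(s)=u_1\,(1-e^{-\lambda s})$. If $\lambda=0$ then $\eta_0=0$ and the problem collapses to the static transmission system, for which an energy integration (using $1-k_0b\geq l>0$, $u_1|_{\d\Omega}=0$, $u_1=v_1$ on $\d\Omega_2$, and the flux condition with $b=0$ near $\d\Omega_2$) yields $u_1\equiv v_1\equiv0$. If $\lambda\neq0$, the invisibility $\nabla\eta_0=0$ on $\textrm{supp}(b)$ reads $(1-e^{-\lambda s})\nabla u_1=0$ there, so $\nabla u_1=0$ on $\textrm{supp}(b)$; substituting into $\lambda^2u_1=k_1\,\textrm{div}((1-k_0b)\nabla u_1)+k_1\!\int_0^\infty g\,\textrm{div}(b\nabla\eta_0)\,ds$ shows that $\lambda^2u_1=0$, hence $u_1\equiv0$, on $\textrm{int}(\textrm{supp}(b))$.

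\emph{Unique continuation -- the main obstacle.} On $\Omega_1\setminus\textrm{supp}(b)$ one has $b=0$, so $u_1$ solves the constant-coefficient Helmholtz equation $k_1\Delta u_1=\lambda^2u_1$; since $\nabla u_1=0$ on $\textrm{supp}(b)$ and $u_1=0$ on $\textrm{int}(\textrm{supp}(b))$, $u_1$ carries vanishing Cauchy data on $\d(\textrm{supp}(b))$, and Holmgren's uniqueness theorem propagates $u_1\equiv0$ throughout the connected set $\Omega_1\setminus\textrm{supp}(b)$, whence $u_1\equiv0$ on $\Omega_1$. In particular $u_1=0$ and $\partial_\nu u_1=0$ on $\d\Omega_2$; the trace condition $u_1=v_1$ gives $v_1=0$ on $\d\Omega_2$, and in the flux condition the memory term vanishes since $b\equiv0$ in a neighbourhood of $\d\Omega_2$, leaving $k_2\partial_\nu v_1=k_1\partial_\nu u_1=0$. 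A second application of Holmgren's theorem to $k_2\Delta v_1=\lambda^2v_1$ on $\Omega_2$ with zero Cauchy data on $\d\Omega_2$ forces $v_1\equiv0$, so $U=0$, contradicting $U\neq0$; thus $N_T=\{0\}$. The delicate points are the compactness of the memory part of $X\hookrightarrow X^{-1}$ in the first step, and arranging the unique continuation so that Holmgren's theorem is applied precisely where the coefficients are constant (outside $\textrm{supp}(b)$ and near $\d\Omega_2$), which is exactly where the hypotheses on $b$ in Assumption \ref{ass:1} are used.
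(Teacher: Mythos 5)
Your proposal is correct and follows essentially the same route as the paper: compactness--uniqueness via the weak observability inequality to get $\dim N_T<\infty$, invariance under time differentiation, reduction to an eigenfunction of the generator, vanishing of the eigenfunction on $\textrm{supp}(b)$ from the invisibility constraint, and unique continuation to conclude $U\equiv 0$ and then $V\equiv 0$. Your write-up is in fact slightly more careful at two points where the paper is terse --- you correctly read the invisibility condition as $\nabla\eta=0$ on $\textrm{supp}(b)$ rather than $\eta=0$, and you treat the $\lambda=0$ case separately --- while your Holmgren argument in the constant-coefficient regions is just an explicit instantiation of the unique continuation property for $\lambda^2-k_1\,\textrm{div}((1-k_0b)\nabla)$ that the paper invokes.
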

We follow closely the classical proof (see for instance \cite{Peppino}).

\beginpf

The set $N_T$ is closed by definition and the uniform escaping geometry assumption allows us to conclude that the solutions of $N_T$ are smooth. Notice that \eqref{AUP} is time-invariant. Therefore, if $(u,v,\eta)$ is a smooth solution of \eqref{AUP}, so is $\d_t (u,v,\eta)$. Therefore, if $(u,v,\eta) \in N_T$, then $\d_t (u,v,\eta) \in N_T$. Moreover, from the weak observability and the compact embedding from $X$ to $X^{-1}$, we conclude that $N_T$ is finite-dimensional. 

We now prove \ref{leminv} by contradiction. We begin by noticing, similarly to Lemma \ref{convetaen}, that 
\[
\int_0^\infty (-g'(s)) \int_{\Omega_1} b(x) | \nabla \eta |^2 dxds =0, 
\]
forces $\eta=0$ over $\textrm{supp}(b)$. So the proof boils down to the proof similar to that of the classical wave equation, which we recall. Assume $(u,v,\eta) \in N_T$ and $(u,v) \neq (0,0)$. From the transmission conditions at the interface, we deduce that if $u=0$, then $v=0$ since every bicharacteristics in $\Omega_2$ encounters $\d \Omega_2$. Therefore we consider $u \neq 0$. Since $N_T$ is finite-dimensional, $\partial_t : N_T \rightarrow N_T $ has at least one complex eigenvalue $\lambda$ such that $\partial_t (u,v) = \lambda (u,v)$. Therefore $(u,v)$ is of the form $(u,v)=e^{\lambda t} (U(x),V(x))$. But from $\eta=0$ over $\textrm{supp}(b)$, we use $\eta(x,t,s)=u(x,t)-u(x,t-s)$, and using the expression of $u$ and $\eta$, we have
\[
(e^t-e^{t-s})U(x)=0, \quad  \textrm{in }L^2((0,T) \times \R^+; H^1(\textrm{supp}(b)).
\]
This implies that $U=0$ in $H^1( \textrm{supp}(b))$ and the unique continuation properties of $(\lambda^2 - k_1 \textrm{div}(1-k_0b(x) \nabla )) $ allows us to conclude that $U=0$ in $H^1( \Omega_1)$ and therefore $V=0$ in $H^1( \Omega_2)$ and, by definition, $\eta=0$ in $H^1(\Omega_1 \times (0,\infty) \times (0,T))$, which is a contradiction and ends the proof. 

\endpf

We finally gather everything to conclude on the proof of Theorem \ref{main}. 

\beginpf

Under the analytical and geometrical assumptions, we conclude on the observability of \eqref{AUP}. Since this equation is autonomous, then the observability implies the exponential stability of \eqref{AUP}, which implies the exponential stability of \eqref{MP1}

\endpf

\bibliographystyle{plain}
\bibliography{Viscoelastic}

\end{document}